
\documentclass[12pt,a4paper]{article}%
\usepackage[utf8]{inputenc}
\usepackage{hyperref}
\usepackage{amsmath}
\usepackage{amsfonts}
\usepackage{amssymb}
\usepackage{graphicx}%
\setcounter{MaxMatrixCols}{30}
\providecommand{\U}[1]{\protect\rule{.1in}{.1in}}
\newtheorem{theorem}{Theorem}

\newtheorem{conjecture}[theorem]{Conjecture}
\newtheorem{corollary}[theorem]{Corollary}

\newtheorem{lemma}[theorem]{Lemma}

\newtheorem{proposition}[theorem]{Proposition}
\newtheorem{remark}[theorem]{Observation}

\newenvironment{proof}[1][Proof]{\noindent\textbf{#1.} }{\ \hfill \rule{0.5em}{0.5em}\bigskip}
\graphicspath{{D:/Dropbox/Riste-Sedlar/MetricDim/Jelena/Slike/}}

\textwidth=16cm
\hoffset=-1.2cm
\voffset=-2.cm
\textheight=23cm
\begin{document}

\title{Extremal mixed metric dimension with respect to the cyclomatic number}
\author{Jelena Sedlar$^{1}$,\\Riste \v Skrekovski$^{2,3}$ \\[0.3cm] {\small $^{1}$ \textit{University of Split, Faculty of civil
engineering, architecture and geodesy, Croatia}}\\[0.1cm] {\small $^{2}$ \textit{University of Ljubljana, FMF, 1000 Ljubljana,
Slovenia }}\\[0.1cm] {\small $^{3}$ \textit{Faculty of Information Studies, 8000 Novo
Mesto, Slovenia }}\\[0.1cm] }
\maketitle

\begin{abstract}
In a graph $G$, the cardinality of the smallest ordered set of vertices that
distinguishes every element of $V(G)\cup E(G)$ is called the mixed metric
dimension of $G,$ and it is denoted by $\mathrm{mdim}(G).$ In
\cite{SedSkrekMixed}\ it was conjectured that for a graph $G$ with cyclomatic
number $c(G)$ it holds that $\mathrm{mdim}(G)\leq L_{1}(G)+2c(G)$ where
$L_{1}(G)$ is the number of leaves in $G$. It is already proven that the
equality holds for all trees and more generally for graphs with edge-disjoint
cycles in which every cycle has precisely one vertex of degree $\geq3$. In
this paper we determine that for every Theta graph $G,$ the mixed metric
dimension $\mathrm{mdim}(G)$ equals $3$ or $4$, with $4$ being attained if and
only if $G$ is a balanced Theta graph. Thus, for balanced Theta graphs the
above inequality is also tight. We conclude the paper by further conjecturing
that there are no other graphs, besides the ones mentioned here, for which the
equality $\mathrm{mdim}(G)=L_{1}(G)+2c(G)$ holds.

\end{abstract}



\section{Introduction}

Let $G$ be a simple connected graph with $n$ vertices and $m$ edges. The
distance between a pair of vertices $u,v\in V(G)$ is defined as the length of
the shortest path connecting $u$ and $v$ in $G$ and is denoted by $d_{G}%
(u,v)$. The distance between a vertex $u\in V(G)$ and an edge $e=vw\in E(G)$
is defined by $d_{G}(u,e)=d_{G}(u,vw)=\min\{d_{G}(u,v),d_{G}(u,w)\}$. For both
these distances we simply write $d(u,v)$ and $d(u,e)$ if no confusion arises.
We say that a vertex $s\in V(G)$ \emph{distinguishes} (or \emph{resolves}) a
pair $x,x^{\prime}\in V(G)\cup E(G)$ if $d(s,x)\not =d(s,x^{\prime}).$ We say
that a set $S\subseteq V(G)$ is a \emph{mixed metric generator} if every pair
$x,x^{\prime}\in V(G)\cup E(G)$ is distinguished by at least one vertex from
$S$. The cardinality of the smallest mixed metric generator is called the
\emph{mixed metric dimension} of $G,$ and it is denoted by $\mathrm{mdim}(G).$

The notion of the mixed metric dimension is the natural generalization of the
notions of the vertex metric dimension and the edge metric dimension which are
defined as the cardinality of the smallest set of vertices which distinguishes
all pairs of vertices and all pairs of edges respectively. The notion of
vertex metric dimension for graphs was independently introduced by
\cite{Harary1976} and \cite{Slater1975}, under the names resolving sets and
locating sets, respectively. Even before, this notion was introduced for the
realm of metric spaces \cite{Blumenthal1953}. The concept of vertex metric
dimension was recently extended from resolving vertices to resolving edges of
a graph by Kelenc, Tratnik and Yero~\cite{Kel}, which lead to the definition
of the edge metric dimension. Finally, it was further extended to resolving
mixed pairs of edges and vertices by Kelenc, Kuziak, Taranenko, and
Yero~\cite{Kelm} which resulted with the notion of the mixed metric dimension.
All these variations of metric dimensions attracted interest (see
\cite{Peterin2020, SedSkreBounds, SedSkrekMixed, Zhu, Zubrilina}), while for a
wider and systematic introduction of the topic metric dimension that
encapsulates all three above mentioned variations, we recommend the PhD thesis
of Kelenc~\cite{KelPhD}.

In literature, among other questions, the mixed metric dimension of trees,
unicyclic graphs and graphs with edge disjoint cycles was studied. Denoting by
$L_{1}(G)$ the number of leaves in a graph $G,$ we first cite the following
result from \cite{Kelm}.

\begin{proposition}
\label{Prop_Kelm}For every tree $T$, it holds
\[
\mathrm{mdim}(T)=L_{1}(T).
\]

\end{proposition}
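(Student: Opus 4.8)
The plan is to establish the two inequalities $\mathrm{mdim}(T)\ge L_{1}(T)$ and $\mathrm{mdim}(T)\le L_{1}(T)$ separately: the first by forcing every leaf into an arbitrary generator, the second by proving that the set $L$ of all leaves is itself a mixed metric generator. For the lower bound, let $\ell$ be a leaf with unique neighbour $s$, and consider the pair consisting of the vertex $s$ and the incident edge $s\ell$. For every vertex $w\neq\ell$ the unique $w$--$\ell$ path runs through $s$, so $d(w,\ell)=d(w,s)+1$ and hence $d(w,s\ell)=\min\{d(w,s),d(w,\ell)\}=d(w,s)$; thus no vertex other than $\ell$ distinguishes $s$ from $s\ell$, whereas $\ell$ does, since $d(\ell,s\ell)=0\neq1=d(\ell,s)$. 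Therefore $\ell$ belongs to every mixed metric generator, and as $\ell$ was arbitrary we obtain $L\subseteq S$ for each generator $S$, i.e. $\mathrm{mdim}(T)\ge L_{1}(T)$.

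For the upper bound I would show that $L$ resolves every pair of distinct elements of $V(T)\cup E(T)$. The main tool is a \emph{projection} observation on the tree metric: if $p$ lies on the unique path from $q$ to a leaf $\ell$ (I call such an $\ell$ a leaf \emph{behind} $p$ relative to $q$), then the $\ell$--$q$ path passes through $p$, so $d(\ell,q)=d(\ell,p)+d(p,q)>d(\ell,p)$. It is convenient to view $T$ as a geometric tree $\widehat T$ in which each edge $e$ carries a midpoint $m_{e}$; then for every leaf $\ell$ one has $d(\ell,e)=d_{\widehat T}(\ell,m_{e})-\tfrac12$, with the same shift $\tfrac12$ for all leaves. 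Consequently, two edges $e,f$ are separated by $\ell$ exactly when $d_{\widehat T}(\ell,m_{e})\neq d_{\widehat T}(\ell,m_{f})$, which reduces the edge--edge case to separating the distinct points $m_{e},m_{f}$; and two vertices $u,v$ are separated exactly when $d(\ell,u)\neq d(\ell,v)$, which is the point--separation statement for $u,v$. In both cases a leaf behind the relevant point supplies the required inequality: if that point is itself a leaf we take $\ell$ equal to it, and otherwise, since the point has a branch not pointing towards the other element, we follow that branch to a leaf of $T$.

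The vertex--edge case is genuinely different, because the $\tfrac12$ shift is not symmetric between a vertex and an edge, so it needs its own short argument. Here, for a vertex $u$ and an edge $e=ab$, I would split according to whether $u$ is an endpoint of $e$. If $u\notin\{a,b\}$, a leaf $\ell$ behind $u$ relative to $e$ gives $d(\ell,e)=d(\ell,u)+d(u,e)>d(\ell,u)$ since $d(u,e)\ge1$. If $u$ is an endpoint, say $u=a$, then any leaf $\ell$ in the component of $T-a$ containing $b$ satisfies $d(\ell,a)=d(\ell,b)+1>\min\{d(\ell,a),d(\ell,b)\}=d(\ell,e)$, so $\ell$ separates $u$ from $e$.

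The main obstacle is the upper bound, and within it the careful bookkeeping of the vertex--edge case together with the verification that a suitable leaf ``behind'' the relevant point always exists. Once the projection observation is in place, each case is short, but one must handle degenerate situations (the relevant point being a leaf, or the two elements sharing a vertex) explicitly to be sure no pair escapes resolution. Combining the two inequalities yields $\mathrm{mdim}(T)=L_{1}(T)$.
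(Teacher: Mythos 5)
The paper does not actually prove this proposition: it is quoted verbatim from the Kelenc--Kuziak--Taranenko--Yero paper (\cite{Kelm}) as a known result, so there is no in-paper argument to compare yours against. Judged on its own, your proof is correct and complete. The lower bound is the standard forcing argument (also used in \cite{Kelm} for arbitrary graphs): for a leaf $\ell$ with support vertex $s$, every $w\neq\ell$ satisfies $d(w,s\ell)=d(w,s)$, so only $\ell$ can resolve the incident pair $s$, $s\ell$, forcing all leaves into every generator. For the upper bound, your midpoint device is a clean way to collapse the vertex--vertex and edge--edge cases into a single ``separate two points of the subdivided tree by a leaf behind one of them'' statement, with the constant shift $\tfrac12$ guaranteeing that separation of midpoints is equivalent to separation of the corresponding edges; the existence of a leaf behind the chosen point is exactly the degree argument you sketch (take the point itself if it is a leaf, otherwise descend along a branch avoiding the other element). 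Your separate treatment of the vertex--edge case is also right, and it is genuinely needed for incident pairs, where the half-integer shift argument breaks down --- this is precisely the exceptional situation flagged in the paper's Observation~\ref{Obs_enclosure}. The only cosmetic quibble is that the phrase ``if that point is itself a leaf'' should be read as applying to the endpoint of the edge behind the midpoint rather than to the midpoint itself, but the intended construction is unambiguous.
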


A graph in which all cycles are pairwise edge disjoint is called a
\emph{cactus graph}. Having that in mind, the following results were proven in
\cite{SedSkrekMixed}, first for unicyclic graphs and after for all cactus graphs.

\begin{theorem}
\label{Prop_cactus}Let $G\not =C_{n}$ be a cactus graph with $c$ cycles. Then
\[
\mathrm{mdim}(G)\leq L_{1}(G)+2c,
\]
and the upper bound is attained if and only if every cycle in $G$ has exactly
one vertex of degree $\geq3$.
\end{theorem}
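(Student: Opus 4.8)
The plan is to isolate first the vertices that every mixed metric generator is forced to contain, then to match this with an explicit construction, and finally to decide equality cycle by cycle. The forced vertices are the leaves: if $u$ is a leaf with neighbour $v$, then for every $s\neq u$ we have $d(s,uv)=\min\{d(s,u),d(s,v)\}=d(s,v)$ because $d(s,u)=d(s,v)+1$, whereas $d(u,uv)=0<1=d(u,v)$. Hence the pair consisting of the vertex $v$ and the pendant edge $uv$ is distinguished only by $u$ itself, so every leaf belongs to every mixed metric generator and $\mathrm{mdim}(G)\geq L_1(G)$. Since $G\neq C_n$ is connected, moreover, each of its $c$ cycles contains at least one vertex of degree $\geq 3$ through which it attaches to the rest of the graph, which is what makes the ``free reference point'' mechanism below available for every cycle.

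For the inequality I would exhibit a generator $S$ formed by all $L_1(G)$ leaves together with two suitably placed vertices on each cycle, and verify that it resolves every pair. The verification rests on one reduction: a leaf $u$ lying in a branch attached at a vertex $w$ satisfies $d(u,x)=d(u,w)+d(w,x)$ for every element $x$ on the far side of $w$, so $u$ resolves such $x$ exactly as $w$ would. Thus pairs internal to the tree parts are handled as in Proposition~\ref{Prop_Kelm}, and the only genuinely new pairs are those involving vertices and edges on the cycles. On a cycle the dangerous pairs are a vertex together with an incident edge lying on the arc pointing away from the reference vertices, precisely the collision that forces $\mathrm{mdim}(C_n)=3$; adding two vertices per cycle to the free reference point already supplied at an attachment vertex gives three reference points in general position around the cycle, which removes all such collisions and yields $\mathrm{mdim}(G)\leq L_1(G)+2c$.

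To characterize equality I would count, for each cycle, how many reference points it needs: resolving all of its vertices and edges requires three reference points in general position, while each attachment vertex of degree $\geq 3$ donates one for free through the leaves forced to lie beyond it. A cycle with a single vertex of degree $\geq 3$ thus needs two dedicated extra vertices, exactly its share $2$ of $L_1(G)+2c$, whereas a cycle with two or more such vertices needs at most one. For the ``only if'' direction this lets me construct, whenever some cycle has at least two vertices of degree $\geq 3$, a generator of size strictly below $L_1(G)+2c$, so the bound is not attained. For the ``if'' direction I would prove the matching lower bound $\mathrm{mdim}(G)\geq L_1(G)+2c$ under the hypothesis that every cycle has exactly one vertex of degree $\geq 3$; induction on $c$, peeling off an end-block cycle of the block-cut tree, lets me add the two forced vertices of that cycle to the bound inherited for the smaller graph, these being distinct from the leaves and from the vertices charged to the other cycles.

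The main obstacle is this last lower bound. One must show that a single-attachment cycle genuinely forces two vertices of the generator to lie effectively on it, and that such vertices can be neither replaced by leaves nor shared with a neighbouring block. I expect the crux to be the local claim that a single dedicated vertex always leaves an unresolved pair, namely a cycle vertex and its incident far-arc edge, mirroring the $C_n$ computation, together with the bookkeeping needed to turn this local obstruction into a clean global count along the block-cut tree without double-counting the reference points reused at the shared attachment vertices.
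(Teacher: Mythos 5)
You should first be aware that the paper contains no proof of Theorem~\ref{Prop_cactus}: it is imported verbatim from \cite{SedSkrekMixed} as a known result, so there is no internal proof to compare you against and your proposal must stand on its own. Judged that way, it has the right skeleton --- the forced-leaf argument giving $\mathrm{mdim}(G)\geq L_{1}(G)$ is correct and standard, and charging two generator vertices to each cycle while letting each branching vertex act as a ``free'' reference point via the leaves behind it is the natural bookkeeping --- but at every place where the theorem has actual content you record an intention rather than an argument. For the upper bound you never specify where on a cycle the two dedicated vertices are placed; ``three reference points in general position'' is undefined and is doing all the work, since the entire difficulty of such constructions is choosing the positions relative to the attachment vertex so that no vertex--incident-edge collision survives (compare how delicate this is already in Lemma~\ref{Lemma_Theta} of this paper, where a plausible-looking $3$-set on a Theta graph fails). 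Your case analysis also omits the pairs $x,x'$ lying in different blocks entirely, which is where the half-enclosure-type arguments are actually needed.

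The more serious gaps are in the equality characterization. For the ``only if'' direction you assert that a cycle with two or more vertices of degree $\geq 3$ ``needs at most one'' dedicated vertex, but you neither exhibit the smaller generator nor verify it, and this is precisely the statement separating the extremal cacti from the rest. For the ``if'' direction you explicitly defer the crux, namely that a cycle with a single branching vertex forces two generator vertices that are not leaves and cannot be charged to any other block; without that local lower bound the theorem is simply not proved. Moreover, the induction you sketch does not set up cleanly: peeling off an end-block cycle at its attachment vertex $w$ changes $L_{1}$ (the vertex $w$ may become a leaf of the smaller graph, or the smaller graph may degenerate to a path or to a single cycle $C_{n}$, which the theorem excludes), so the inherited bound does not simply add $2$ without a careful accounting of how the leaf count and the base cases behave under the peeling. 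In short, this is a reasonable plan of attack, but the verification of the construction, the ``only if'' construction, and above all the per-cycle lower bound --- the heart of the theorem --- are all missing.
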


The cyclomatic number of a graph $G$ is defined by $c(G)=m-n+1.$ As the number
of cycles in trees and graphs with edge disjoint cycles equals the cyclomatic
number, this lead the authors of \cite{SedSkrekMixed} to make the following conjecture.

\begin{conjecture}
\label{Con_inequality}Let $G\not =C_{n}$ be a graph, $c(G)$ its cyclomatic
number, and $L_{1}(G)$ the number of leaves in $G$. Then
\begin{equation}
\mathrm{mdim}(G)\leq L_{1}(G)+2c(G). \label{Inequality_conj}%
\end{equation}

\end{conjecture}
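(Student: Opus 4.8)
The plan is to prove the displayed inequality \eqref{Inequality_conj} by induction on the cyclomatic number $c(G)$. The base case $c(G)=0$ is exactly the class of trees, for which Proposition~\ref{Prop_Kelm} gives $\mathrm{mdim}(T)=L_{1}(T)$, matching the bound with $c=0$. For the inductive step one would like to delete a well-chosen edge $e$ lying on a cycle, so that $c(G-e)=c(G)-1$, apply the inductive hypothesis to $G-e$, and then control the change $\mathrm{mdim}(G)-\mathrm{mdim}(G-e)$. The intuition behind the term $+2c(G)$ is that each independent cycle introduces a local symmetry that costs at most two additional landmarks to resolve, precisely as in the cactus case of Theorem~\ref{Prop_cactus}.

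The main obstacle is that $\mathrm{mdim}$ is \emph{not} monotone under edge deletion: removing $e$ alters distances throughout the graph, may create new leaves, and a mixed metric generator of $G-e$ need not resolve all pairs of $G$ (nor conversely). Hence the inductive step cannot be carried out by a naive subgraph comparison. Instead I would start from a mixed metric generator $S$ of $G-e$, augment it by at most two vertices placed near the endpoints of $e$, and verify directly that the enlarged set distinguishes every mixed pair of $G$. The genuinely delicate pairs are those involving edges and vertices lying on distinct cycles that share vertices, since there the resolving contributions of different cycles interact and the per-cycle budget of $2$ is hardest to respect.

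Because this general step resists a uniform treatment, the concrete case I would settle first is the smallest genuinely non-cactus family, the Theta graphs, where $L_{1}(G)=0$ and $c(G)=2$, so that \eqref{Inequality_conj} reduces to $\mathrm{mdim}(G)\le 4$. For the upper bound I would exhibit an explicit generator --- for instance the two branch vertices of degree $3$ together with one or two suitably placed interior vertices --- and then check, case by case, that all vertex--vertex, edge--edge and vertex--edge pairs receive distinct distance vectors. When the three internally disjoint paths have pairwise distinct lengths, the built-in asymmetry should let one discard a landmark, yielding a generator of size $3$ and hence $\mathrm{mdim}(G)\le 3$ in the unbalanced case.

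The matching lower bound $\mathrm{mdim}(G)\ge 4$ for \emph{balanced} Theta graphs is where the real difficulty lies, and it is the step I expect to require the most careful analysis. Here I would exploit the large automorphism group: a balanced Theta graph admits automorphisms permuting the three paths and swapping the two poles, and any two mixed elements in a common orbit that are equidistant from every chosen landmark remain unresolved. The aim is to show that no set of three vertices can simultaneously break all these symmetries, focusing on the hard pairs formed by the edges incident to the two degree-$3$ poles; establishing that three landmarks always leave some such symmetric pair unresolved while four suffice is the crux of the argument.
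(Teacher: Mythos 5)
The statement you are asked about is Conjecture~\ref{Con_inequality}, and the paper does not prove it: it is an open conjecture (from \cite{SedSkrekMixed}) that the paper only \emph{verifies} for the single new family of Theta graphs, in Sections~3 and~4. Your proposal likewise does not prove it, and the gap is exactly where you flag it. The induction on $c(G)$ breaks down at the inductive step: after deleting an edge $e$ on a cycle you would need to show that a mixed metric generator of $G-e$ can be repaired with at most two extra vertices to resolve all mixed pairs of $G$, while also accounting for the fact that $L_1(G-e)$ may exceed $L_1(G)$ (deleting $e$ can create up to two new leaves, which inflates the inductive bound rather than helping you). The assertion that ``each independent cycle costs at most two additional landmarks'' is not a lemma you can invoke; it \emph{is} the conjecture, and the paper's Proposition~\ref{Prop_kapa3} (for $3$-connected graphs $\mathrm{mdim}(G)<2c(G)$) already shows that the true behaviour is not governed by any local per-cycle accounting. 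Writing ``this general step resists a uniform treatment'' and then restricting to Theta graphs means you have proved a special case, not the statement.

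On the Theta-graph special case your outline does track the paper: an explicit $4$-set for the upper bound (the paper uses $u$, $v$ and two neighbours of $v$ in Theorem~\ref{Prop_Theta}), a $3$-set exploiting asymmetry in the unbalanced case (Lemma~\ref{Lemma_unbalanced} uses antipodal vertices of $u$ and $v$ on the long cycle plus a midpoint of the middle path), and a lower bound of $4$ in the balanced case. But your plan for the lower bound via the automorphism group is too optimistic: ``balanced'' in this paper means the three path lengths differ by at most one, so e.g.\ lengths $k,k,k+1$ are allowed and there is no automorphism permuting all three paths. The paper's Lemma~\ref{Lemma_Theta} instead carries out a lengthy case analysis ($r\in\{-1,0,1\}$, Claims A--E) to exhibit, for every $3$-set meeting each path internally, a concrete unresolved pair -- sometimes a vertex--incident-edge pair, sometimes a pair of edges or vertices symmetric about carefully constructed points $w$, $w'$. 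A symmetry argument alone will not close that case, and the cases where $S$ contains one or both of the degree-$3$ vertices also need the separate (easier) arguments given in Theorem~\ref{Prop_Theta}.
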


Notice that Proposition \ref{Prop_Kelm} and Theorem \ref{Prop_cactus} imply
that the equality in (\ref{Inequality_conj}) holds for all cactus graphs in
which every cycle has precisely one vertex of degree $\geq3$ (this includes
all trees and unicyclic graphs with precisely one vertex on the cycle with
degree $\geq3$). A natural question that arises is - are there any other
graphs for which the equality in (\ref{Inequality_conj}) holds? In this paper
we will try to further clarify this question.

\section{Preliminaries}

The \emph{(vertex) connectivity} $\kappa(G)$ of a graph $\emph{G}$ is the
minimum size of a vertex cut, i.e. any subset of vertices $S\subseteq V(G)$
such that $G-S$ is disconnected or has only one vertex. We say that a graph
$G$ is $k$\emph{-connected} if $\kappa(G)\geq k.$ As we are going to study the
graphs for which the equality in (\ref{Inequality_conj}) holds, it is useful
to state the following result from \cite{SedSkrekMixed}.

\begin{proposition}
\label{Prop_kapa3}Let $G$ be a $3$-connected graph. Then $\mathrm{mdim}%
(G)<2c(G).$
\end{proposition}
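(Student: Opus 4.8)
The plan is to combine a lower bound on $2c(G)$ forced by the connectivity hypothesis with the universal upper bound $\mathrm{mdim}(G)\le n$. First I would extract from $3$-connectivity the degree condition $\delta(G)\ge 3$: since $\kappa(G)\ge 3$, no vertex can have degree less than $3$ (a vertex of degree $d$ can be separated from the rest of the graph by removing its $d$ neighbors). The handshaking lemma then gives $2m=\sum_{v\in V(G)}\deg(v)\ge 3n$, so $m\ge \tfrac{3n}{2}$. Substituting into the definition $c(G)=m-n+1$ yields $c(G)\ge \tfrac{n}{2}+1$, and therefore $2c(G)\ge n+2$.

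Next I would establish the upper bound $\mathrm{mdim}(G)\le n$ by verifying that the entire vertex set $S=V(G)$ is a mixed metric generator, so that the mixed metric dimension cannot exceed $n$. This requires checking that every pair $x,x'\in V(G)\cup E(G)$ is distinguished by some vertex of $G$, which splits into three routine cases. For two distinct vertices $u,v$, the vertex $u$ itself resolves them since $d(u,u)=0<d(u,v)$. For a vertex $u$ and an edge $e$, if $u$ is not an endpoint of $e$ then $u$ resolves the pair (giving $d(u,u)=0<d(u,e)$), while if $u$ is an endpoint of $e=uw$ the other endpoint $w$ resolves it since $d(w,e)=0<d(w,u)=1$. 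For two distinct edges $e,f$, an endpoint of $e$ not lying on $f$ resolves them, and if every endpoint of $e$ lies on $f$ (so they share a vertex, say $e=ab$, $f=ad$ with $b\ne d$), then $b$ resolves them because $d(b,e)=0<d(b,f)=1$. Hence $V(G)$ distinguishes all pairs and $\mathrm{mdim}(G)\le n$.

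Finally, chaining the two bounds gives $\mathrm{mdim}(G)\le n< n+2\le 2c(G)$, which is the desired strict inequality. I expect the degree-counting step to carry all the essential content, since the strictness is produced precisely by the slack $2c(G)-n\ge 2$ coming from $\delta(G)\ge 3$; a weaker connectivity assumption ($\delta(G)\ge 2$) would only give $2c(G)\ge 2$ and fail to beat the bound $n$. The only point requiring care is the verification that $V(G)$ is always a mixed metric generator, but as the case analysis above shows this is elementary, so there is no serious obstacle beyond assembling these observations correctly.
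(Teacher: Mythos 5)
Your proof is correct. Note that the paper does not actually prove this proposition here; it is only quoted from \cite{SedSkrekMixed}, so there is no in-text proof to compare against. Your argument --- $\kappa(G)\ge 3$ forces $\delta(G)\ge 3$, hence $2c(G)=2(m-n+1)\ge n+2$, combined with the universal bound $\mathrm{mdim}(G)\le n$ obtained by checking that $V(G)$ is a mixed metric generator --- is the natural and complete route, and the strictness indeed comes exactly from the slack $2c(G)-n\ge 2$. One cosmetic slip: in the edge--edge case, the subcase ``every endpoint of $e$ lies on $f$'' cannot occur for distinct edges of a simple graph (it would force $e=f$); what you actually use, and all you need, is that two distinct edges always admit an endpoint of one that is not an endpoint of the other, and that vertex distinguishes them since it is at distance $0$ from one edge and at distance at least $1$ from the other.
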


This proposition implies that equality in (\ref{Inequality_conj}) may hold
only for graphs with $\kappa(G)=1$ (beside cactus graphs in which every cycle
has precisely one vertex of degree $\geq3$) and $\kappa(G)=2$. A class of
graphs with $\kappa(G)=2$ which will be of interest to us are so called Theta graphs.

We say that a graph $G$ is a \emph{Theta} graph or a $\Theta$\emph{-graph} if
$G$ is a graph with two vertices $u$ and $v$ of degree $3$ and all other
vertices in $G$ are of degree $2.$ We say that a Theta graph $G$ is
\emph{balanced} if the lengths of all three paths connecting $u$ and $v$
differ by at most $1,$ otherwise we say that $G$ is \emph{unbalanced}. In this
paper we will prove that the equality in (\ref{Inequality_conj}) holds also
for balanced Theta graphs. But, before we show that, we need to introduce the
following notion which will be of use to us in the sequel.

Let $G$ be a graph and let $S\subseteq V(G)$ be a set of vertices of a graph
$G$. Any shortest path between two vertices from $S$ is called a
$S$\emph{-closed} path. Let $x$ and $x^{\prime}$ be a pair of elements from
the set $V(G)\cup E(G)$. We say that a pair $x$ and $x^{\prime}$ is
\emph{enclosed} by $S$ if there is a $S$-closed path containing $x$ and
$x^{\prime}$. We say that a pair $x$ and $x^{\prime}$ is \emph{half-enclosed}
by $S$ if there is a vertex $s\in S$ such that a shortest path from $s$ to $x$
contains $x^{\prime}$ or a shortest path from $s$ to $x^{\prime}$ contains $x$.

\begin{remark}
\label{Obs_enclosure}Let $G$ be a graph, let $S\subseteq V(G)$ be a set of
vertices in $G$ and let $x$ and $x^{\prime}$ be a pair of elements from the
set $V(G)\cup E(G)$. If $x$ and $x^{\prime}$ are enclosed by $S,$ then $x$ and
$x^{\prime}$ are distinguished by $S.$ If $x$ and $x^{\prime}$ are
half-enclosed by $S$ then $x$ and $x^{\prime}$ are distinguished by $S$ in all
cases except possibly when $x$ and $x^{\prime}$ are a pair consisting of a
vertex and an edge which are incident to each other.
\end{remark}

We say that a subgraph $H$ of a graph $G$ is an \emph{isometric} subgraph, if
for any two vertices $u,v\in V(H)$ it holds that $d_{H}(u,v)=d_{G}(u,v)$. The
following notation for paths is used. Suppose that $P$ is a path and $u,v\in
V(P),$ then by $P[u,v]$ we denote the subpath of $P$ connecting vertices $u$
and $v,$ while by $P(u,v)$ we denote $P[u,v]-\{u,v\}$. Notions $P[u,v)$ and
$P(u,v]$ are also used and they denote the subpaths where only one of the
end-vertices of $P[u,v]$ is excluded.

\section{Balanced Theta graphs}

Notice that every Theta graph $G$ has the cyclomatic number $c(G)=2.$ Also,
for every Theta graph $G$ the number of leaves equals zero, i.e. $L_{1}(G)=0.$
Therefore, for a Theta graph $G,$ the equality in (\ref{Inequality_conj}) will
hold if and only if $\mathrm{mdim}(G)=4.$ In this section we will show that
for balanced Theta graphs precisely that holds, i.e. $\mathrm{mdim}(G)=4$ if
and only if a Theta graph $G$ is balanced. First we need the following lemma.

\begin{lemma}
\label{Lemma_Theta}Let $G$ be a balanced Theta graph with vertices $u$ and $v$
of degree $3.$ Let $S\subseteq V(G)$ be a set of vertices in $G$ such that
$\left\vert S\right\vert =3$ and $S$ contains precisely one internal vertex
from each of the three distinct paths connecting vertices $u$ and $v.$ Then
$S$ is not a mixed metric generator in $G$.
\end{lemma}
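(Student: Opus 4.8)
The plan is to show that for a set $S$ consisting of exactly one internal vertex on each of the three $u$–$v$ paths, there is always some pair of mixed elements that $S$ fails to distinguish. I will label the three paths $P_1, P_2, P_3$, let their lengths be $\ell_1 \le \ell_2 \le \ell_3$ (so that balancedness gives $\ell_3 - \ell_1 \le 1$), and let $s_i \in S$ be the chosen internal vertex on $P_i$, at distance $a_i$ from $u$ along $P_i$ (equivalently $\ell_i - a_i$ from $v$). The strategy is to exploit the symmetry of the balanced graph and the fact that each $s_i$ "sees" the two branch vertices $u, v$ and the far paths only through $u$ or $v$. The idea is that from the vantage of any $s_i$, the rest of the graph splits into the part reachable through $u$ and the part reachable through $v$, and the three generators are too few to break all the resulting ties.

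Let me verify what a single $s_i$ can resolve. Consider the distances from $s_i$. Along its own path $P_i$, the vertex $s_i$ resolves everything by the half-enclosure of Remark~\ref{Obs_enclosure}. The trouble lies with elements on the other two paths: for an element $x$ lying on $P_j$ ($j \ne i$), the distance $d(s_i, x)$ equals $\min\{d(s_i,u) + d(u,x),\ d(s_i,v) + d(v,x)\}$, so $s_i$ effectively only measures $x$'s distance to the nearer of $u, v$. Consequently $s_i$ cannot distinguish two elements on $P_j \cup P_k$ that are symmetric with respect to the $u$–$v$ exchange as seen from $s_i$. The concrete plan is to locate, for each candidate position of the three generators, a specific offending pair — I expect the natural candidates to be either (a) two vertices symmetric about the midpoint of a path, or (b) a vertex–edge pair on a path opposite to where the generators sit, which remains incident-type ambiguous even under half-enclosure.

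The cleanest route is a counting/pigeonhole argument combined with the balanced condition. Because all three paths have nearly equal length and each carries exactly one generator, I expect to show that one can always reflect the whole configuration through the natural $u \leftrightarrow v$ symmetry (or through the midpoint of the "middle-length" path) so as to produce an automorphism-like pairing of two distinct mixed elements whose distance vectors to all of $s_1, s_2, s_3$ coincide. Concretely, I would set up the three distance functions $d(s_i, \cdot)$ as piecewise-linear functions along each path, parametrized by arc-length, and exhibit two points (or a point and an edge) at which all three functions agree. The balancedness is exactly what guarantees the relevant symmetry survives and the three "break points" $a_1, a_2, a_3$ cannot simultaneously separate every symmetric pair.

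The main obstacle, and where the bulk of the careful work will go, is the case analysis on the parities and exact values of $\ell_1, \ell_2, \ell_3$ (three equal lengths, versus two equal and one longer by $1$, versus lengths $\ell, \ell, \ell+1$ or $\ell, \ell+1, \ell+1$), together with the positions $a_i$ of the generators, which need not be symmetric. I anticipate that in each such subcase one must nail down the explicit offending pair; the delicate instances are those where a generator sits exactly at a path's midpoint, since then half-enclosure may resolve a vertex–edge pair and I must instead find a vertex–vertex or edge–edge pair. The heart of the argument is therefore to confirm that no placement of three internal generators can break every such symmetric pair at once, which is precisely why a fourth vertex is needed and why $\mathrm{mdim}(G)=4$ for balanced Theta graphs.
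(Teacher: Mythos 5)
Your proposal is a strategy outline rather than a proof: every step that actually establishes the lemma is deferred with phrases such as ``I expect to show,'' ``I would set up,'' and ``I anticipate that in each such subcase one must nail down the explicit offending pair.'' The entire mathematical content of this lemma lies precisely in exhibiting those pairs, and none is exhibited. The easy configurations (your case (b), where a generator $s_i$ is strictly closer to one of $u,v$ along both cycles through $P_i$, so that $s_i$ and an incident edge are half-enclosed from the wrong side) are indeed quick, and correspond to the paper's Cases 1 and 2. But the substantive difficulty is the mixed configuration where one pair of generators is antipodal on its common even cycle while another pair is not, and there your proposed mechanism does not work as stated.

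Concretely, you plan to ``reflect the whole configuration through the natural $u\leftrightarrow v$ symmetry'' to produce two elements with identical distance vectors. The generator positions $a_1,a_2,a_3$ are arbitrary, so in general no automorphism of $G$ fixes $S$, and the undistinguished pair cannot be obtained as an orbit of a symmetry. In the paper this pair is found by explicit computation: one sets $q=\tfrac{1}{2}\bigl(d(v',u)+d(u,s_3)-d(v,s_3)\bigr)$, places $w$ on $P_1$ at distance $\lfloor q\rfloor$ from an auxiliary vertex $v'$ and $w'$ on $P_2$ at distance $\lfloor q\rfloor$ from $v$, and then verifies — separately for $r=|P_3|-|P_1|\in\{-1,0,1\}$, since the parity of $2q$ depends on $r$ — that either the vertex pair $w,w'$ or the edge pair $wz,w'z'$ has equal distance to all three generators. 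Moreover, this computation is only valid after first proving (the paper's Claims A--C) that $s_3$ can be assumed to lie in a specific window on $P_3$; outside that window a different offending pair (an incident vertex--edge pair at $s_3$) must be used, and showing it is undistinguished requires a separate distance comparison, not a symmetry. Your outline correctly predicts that the case analysis on lengths and positions is where the work lies, but the proof is exactly that work, and it is absent.
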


\begin{proof}
Let $P_{1},$ $P_{2}$ and $P_{3}$ denote the three distinct paths in $G$
connecting vertices $u$ and $v.$ For a pair of vertices $v_{1}\in P_{i}$ and
$v_{2}\in P_{j},$ where $i\not =j,$ we denote by $d_{u}(v_{1},v_{2})$ (resp.
$d_{v}(v_{1},v_{2})$) the length of the shortest path connecting vertices
$v_{1}$ and $v_{2}$ and which contains vertex $u$ (resp. $v$). By $C_{ij}$ we
denote the cycle induced by paths $P_{i}$ and $P_{j}$. Let $S=\{s_{1}%
,s_{2},s_{3}\}$ be a set of vertices which contains precisely one internal
vertex from each of the three distinct paths connecting vertices $u$ and $v,$
where the elements of $S$ are denoted so that $s_{i}$ belongs to $P_{i}.$ Let
$P_{ij}$ and $P_{ij}^{\prime}$ be the two internally vertex disjoint paths
connecting vertices $s_{i}$ and $s_{j},$ which induce the cycle $C_{ij},$
denoted so that $\left\vert P_{ij}\right\vert \leq\left\vert P_{ij}^{\prime
}\right\vert .$ If $P_{12}$ and $P_{13}$ do not share any other vertex besides
$s_{1},$ then either $P_{12}$ and $P_{23}$ share it or $P_{13}$ and $P_{23}$
do. Therefore, at least one pair of paths $P_{ij}$ shares one more vertex
besides vertices from $S,$ say $P_{12}$ and $P_{13}.$

Next, we distinguish the following three cases.

\medskip\noindent\textbf{Case 1:} $\left\vert P_{12}\right\vert <\left\vert
P_{12}^{\prime}\right\vert $\emph{ and }$\left\vert P_{13}\right\vert
<\left\vert P_{13}^{\prime}\right\vert .$ In this case let $w$ be the neighbor
of $s_{1}$ not contained in paths $P_{12}$ and $P_{13},$ then $s_{1}$ and
$s_{1}w$ are not distinguished by $S,$ so $S$ is not a mixed metric generator.

\medskip\noindent\textbf{Case 2:} $\left\vert P_{12}\right\vert =\left\vert
P_{12}^{\prime}\right\vert $\emph{ and }$\left\vert P_{13}\right\vert
=\left\vert P_{13}^{\prime}\right\vert .$ In this case two edges incident to
$s_{1}$ are not distinguished by $S,$ so $S$ cannot be a mixed metric generator.

\medskip\noindent\textbf{Case 3:} $\left\vert P_{12}\right\vert =\left\vert
P_{12}^{\prime}\right\vert $\emph{ and }$\left\vert P_{13}\right\vert
<\left\vert P_{13}^{\prime}\right\vert .$ First notice that $\left\vert
P_{12}\right\vert =\left\vert P_{12}^{\prime}\right\vert $ implies that the
cycle $C_{12}$ is even and the pair of vertices $s_{1}$ and $s_{2}$ is an
antipodal pair on $C_{12}.$ Since $G$ is a balanced Theta graph, the fact that
$C_{12}$ is even implies $\left\vert P_{1}\right\vert =\left\vert
P_{2}\right\vert .$ Therefore, $u$ and $v$ are also an antipodal pair on
$C_{12}$ and it holds that
\begin{equation}
d(s_{1},u)=d(s_{2},v)\text{ \ \ and \ \ }d(s_{1},v)=d(s_{2}%
,u).\label{For_s1s2}%
\end{equation}
As we assumed that $P_{12}$ and $P_{13}$ share another vertex beside $s_{1},$
notice that precisely one of the vertices $u$ and $v$, say $v,$ belongs to
both $P_{12}$ and $P_{13}.$

\bigskip\noindent\textbf{Claim A.} \textit{If }$d(s_{1},v)>d(s_{2}%
,v),$\textit{ then }$\left\vert P_{23}\right\vert <\left\vert P_{23}^{\prime
}\right\vert $\textit{.}

\medskip\noindent To prove Claim A, let us assume $d(s_{1},v)>d(s_{2},v).$
Then (\ref{For_s1s2}) promptly implies $d(s_{1},u)<d(s_{2},u).$ Also, since
$P_{13}$ leads through the vertex $v,$ we have%
\[
d(s_{1},v)+d(v,s_{3})\leq d(s_{1},u)+d(u,s_{3}).
\]
Therefore, we obtain%
\begin{align*}
d_{v}(s_{2},s_{3})  &  =d(s_{2},v)+d(v,s_{3})<d(s_{1},v)+d(v,s_{3})\\
&  \leq d(s_{1},u)+d(u,s_{3})<d(s_{2},u)+d(u,s_{3})=d_{u}(s_{2},s_{3}).
\end{align*}
This means that $P_{23}$ leads through $v,$ while $P_{23}^{\prime}$ leads
through $u,$ and $\left\vert P_{23}\right\vert <\left\vert P_{23}^{\prime
}\right\vert .$ So, the claim is proven.\medskip

In the light of Claim A, notice that either $d(s_{1},v)\leq d(s_{2},v)$ or
$d(s_{1},v)>d(s_{2},v).$ If $d(s_{1},v)>d(s_{2},v),$ then by Claim A we have
$\left\vert P_{23}\right\vert <\left\vert P_{23}^{\prime}\right\vert ,$ so
switching indices $1$ and $2$ reduces this case to the case $d(s_{1},v)\leq
d(s_{2},v).$ Therefore, without loss of generality, we may assume that
$d(s_{1},v)\leq d(s_{2},v).$ From this and (\ref{For_s1s2}) we immediately
obtain
\begin{equation}
d(s_{1},v)\leq d(s_{2},v)=d(s_{1},u).\label{For_s1}%
\end{equation}
Therefore, there must exist a vertex $v^{\prime}$ on $P_{1},$ distinct from
$v,$ such that $d(v,s_{1})=d(s_{1},v^{\prime}).$ Let $a$ and $b$ be vertices
on $P_{3}$ such that $d(v,a)=d(v,s_{1})$ and $d(u,b)=d(v,s_{1}).$ This
situation is illustrated by Figure \ref{Fig_Case3} a) and we will further
consider the position of $s_{3}$ on $P_{3}.$\begin{figure}[h]
\begin{center}%
\begin{tabular}
[t]{llllll}%
a)\vspace{-0.5cm} &  & b) &  & c) & \\
& \includegraphics[scale=0.7]{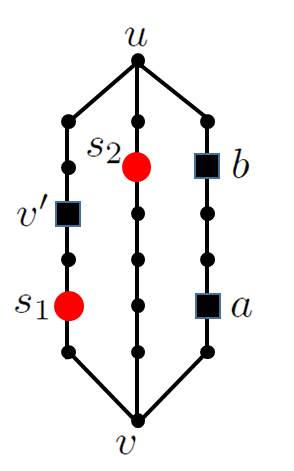} &  &
\includegraphics[scale=0.7]{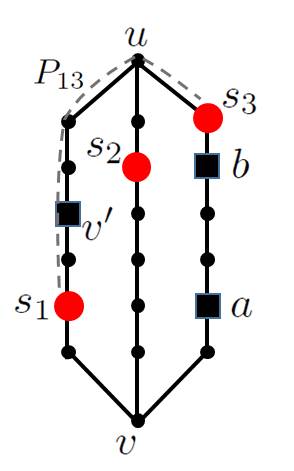} &  &
\includegraphics[scale=0.7]{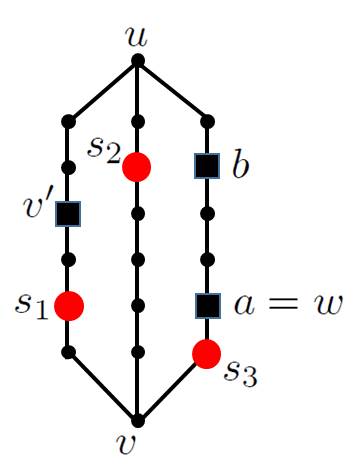}
\end{tabular}
\end{center}
\par
\caption{In the proof of Lemma \ref{Lemma_Theta}: a) the position of vertices
$s_{1},$ $s_{2},$ $v^{\prime}$, $a$ and $b$; b) the contradictory path
$P_{13}$ when $d(v,s_{3})>d(v,b)$; c) the undistioguished pair $s_{3}$ and
$s_{3}w$ when $d(v,s_{3})<d(v,a)$.}%
\label{Fig_Case3}%
\end{figure}

\bigskip\noindent\textbf{Claim B.} \textit{It holds that }$d(v,s_{3})\leq
d(v,b)$\textit{.}

\medskip\noindent Asume the contrary, i.e. $d(v,s_{3})>d(v,b)$ and notice
that
\begin{align*}
d_{u}(s_{1},s_{3})  &  <\left\vert P_{1}\right\vert -d(v,s_{1}%
)+d(u,b)=\left\vert P_{1}\right\vert ,\\
d_{v}(s_{1},s_{3})  &  =d(v,s_{1})+d(v,s_{3})>d(v,s_{1})+d(v,b)=d(v,s_{1}%
)+\left\vert P_{3}\right\vert -d(u,b)=\left\vert P_{3}\right\vert .
\end{align*}
Now, there are two possibilities, either $\left\vert P_{3}\right\vert
\geq\left\vert P_{1}\right\vert $ or $\left\vert P_{3}\right\vert <\left\vert
P_{1}\right\vert .$ Assuming $\left\vert P_{3}\right\vert \geq\left\vert
P_{1}\right\vert $ leads us to the conclusion that $d_{v}(s_{1},s_{3}%
)>\left\vert P_{3}\right\vert \geq\left\vert P_{1}\right\vert =d_{u}%
(s_{1},s_{3}),$ which means that the shortest path $P_{13}$ from $s_{1}$ to
$s_{3}$ contains $u$ which is a contradiction (see Figure \ref{Fig_Case3} b)).
Assuming the other possibility $\left\vert P_{3}\right\vert <\left\vert
P_{1}\right\vert ,$ leads us to $d_{v}(s_{1},s_{3})>\left\vert P_{3}%
\right\vert =\left\vert P_{1}\right\vert -1=d_{u}(s_{1},s_{3})-1,$ i.e.
$d_{v}(s_{1},s_{3})\geq d_{u}(s_{1},s_{3}).$ Therefore, either $d_{v}%
(s_{1},s_{3})>d_{u}(s_{1},s_{3})$ which again leads to the conclusion that
$P_{13}$ contains $u$ which is a contradiction, or $d_{v}(s_{1},s_{3}%
)=d_{u}(s_{1},s_{3})$ which implies $\left\vert P_{13}\right\vert =\left\vert
P_{13}^{\prime}\right\vert $ which contradicts the assumption of this case.
Therefore, we have proven that assumption $d(v,s_{3})>d(v,b)$ always leads to
a contradiction, so Claim B is proven.

\bigskip\noindent\textbf{Claim C.} \textit{If }$d(v,s_{3})<d(v,a),$\textit{
then }$S$\textit{ is not a mixed metric generator.}

\medskip\noindent Assume $d(v,s_{3})<d(v,a)$ and let $w$ denote the neighbor
of $s_{3}$ such that $d(w,v)>d(s_{3},v)$ as is illustrated by Figure
\ref{Fig_Case3} c). Let $x=s_{3}$ and $x^{\prime}=s_{3}w$ and notice that $x$
and $x^{\prime}$ are not distinguished by $s_{3}.$ To prove that $x$ and
$x^{\prime}$ are not distinguished by $s_{2}$ either, recall that the vertex
$a$ was chosen so that $d(v,a)=d(v,s_{1})=d(u,s_{2}).$ Having that in mind,
notice that
\begin{align*}
d_{u}(w,s_{2})  &  \geq d(a,u)+d(u,s_{2})=\left\vert P_{3}\right\vert
-d(a,v)+d(u,s_{2})=\left\vert P_{3}\right\vert ,\\
d_{v}(w,s_{2})  &  \leq d(a,v)+d(v,s_{2})=d(a,v)+\left\vert P_{2}\right\vert
-d(u,s_{2})=\left\vert P_{2}\right\vert ,
\end{align*}
so we have%
\[
d_{v}(w,s_{2})-d_{u}(w,s_{2})\leq\left\vert P_{2}\right\vert -\left\vert
P_{3}\right\vert \leq1.
\]
If $d_{v}(w,s_{2})-d_{u}(w,s_{2})\leq0$ then the shortest path from both $x$
and $x^{\prime}$ to $s_{2}$ leads through $v,$ so they are not distinguished
by $s_{2}.$ Otherwise, if $d_{v}(w,s_{2})-d_{u}(w,s_{2})=1,$ that implies
$d_{u}(w,s_{2})<d_{v}(w,s_{2})$ and $d_{v}(s_{3},s_{2})+1=d_{u}(s_{3},s_{2}),$
from which we further obtain
\begin{align*}
d(x^{\prime},s_{2})  &  =d(w,s_{2})=d_{u}(w,s_{2})\\
d(x,s_{2})  &  =d(s_{3},s_{2})=d_{v}(s_{3},s_{2})=d_{v}(w,s_{2})-1=d_{u}%
(w,s_{2})
\end{align*}
so $x$ and $x^{\prime}$ are again not distinguished by $s_{2}$. The inequality
(\ref{For_s1}) now implies that $x$ and $x^{\prime}$ are not distinguished by
$s_{1}$ either. We conclude that $x$ and $x^{\prime}$ are not distinguished by
$S,$ so the claim is established.

\medskip Given Claims B and C, the only remaining possibility is $d(v,a)\leq
d(v,s_{3})\leq d(v,b)$. Recall that $\left\vert P_{1}\right\vert =\left\vert
P_{2}\right\vert $ and let $\left\vert P_{3}\right\vert =\left\vert
P_{1}\right\vert +r$ where the fact that $G$ is balanced implies that $r$ can
take only values $-1,0,1$. Let us further denote%
\begin{equation}
q=\frac{d(v^{\prime},u)+d(u,s_{3})-d(v,s_{3})}{2}=\frac{2\left\vert
P_{1}\right\vert -2d(v,s_{1})+r-2d(v,s_{3})}{2}. \label{For_q}%
\end{equation}
Notice that the second expression for $q$ in (\ref{For_q}) implies that $q$ is
integer if and only if $r=0.$ We also want to use the first expression for $q$
in (\ref{For_q}) to derive a bound on $d(v^{\prime},u)$ from it. For that
purpose notice that
\begin{align*}
d(u,s_{3})-d(v,s_{3})  &  =\left\vert P_{3}\right\vert -2d(s_{3},v)=\left\vert
P_{1}\right\vert +r-2d(v,a)-2d(s_{3},a)\\
&  \leq\left\vert P_{1}\right\vert +r-2d(v,a)=\left\vert P_{1}\right\vert
+r-2d(v,s_{1})=d(v^{\prime},u)+r
\end{align*}
with equality holding if and only if $d(s_{3},a)=0.$ Plugging this in
(\ref{For_q}) yields
\[
q\leq\frac{d(v^{\prime},u)+d(v^{\prime},u)+r}{2}%
\]
which further implies $d(v^{\prime},u)\geq q-\frac{r}{2}.$ Let $w$ be the
vertex from $P_{1}[v^{\prime},u]$ such that $d(v^{\prime},w)=\left\lfloor
q\right\rfloor .$ Notice that such a vertex $w$ must exist on $P_{1}$ because
the fact that $q$ is integer only for $r=0$ implies
\[
d(v^{\prime},w)=\left\lfloor q\right\rfloor \leq q-\frac{r}{2}\leq
d(v^{\prime},u).
\]
Moreover, notice that in the case when $r=-1$ even stricter upper bound for
$d(v^{\prime},w)$ holds, i.e. $d(v^{\prime},w)\leq q-\frac{r}{2}-1\leq
d(v^{\prime},u)-1$. Now, let $w^{\prime}$ be the vertex on $P_{2}$ such that
$d(w^{\prime},v)=\left\lfloor q\right\rfloor $.

\bigskip\noindent\textbf{Claim D.} \textit{It holds that }$d_{u}%
(w,s_{3})<d_{v}(w,s_{3}).$

\medskip\noindent Notice that%
\begin{align*}
d_{u}(w,s_{3}) &  =d(w,u)+d(u,s_{3})=d(v^{\prime},u)-d(w,v^{\prime}%
)+d(u,s_{3}),\\
d_{v}(w,s_{3}) &  =d(w,v^{\prime})+2(v,s_{1})+d(v,s_{3}).
\end{align*}
Therefore, we have%
\[
d_{u}(w,s_{3})-d_{v}(w,s_{3})=d(v^{\prime},u)-2d(w,v^{\prime})+d(u,s_{3}%
)-2(v,s_{1})-d(v,s_{3}).
\]
Now the fact that $d(v^{\prime},w)=\left\lfloor q\right\rfloor $ and the
definition of $q$ further imply%
\[
d_{u}(w,s_{3})-d_{v}(w,s_{3})=-2\left\lfloor q\right\rfloor -2(v,s_{1})+2q<0
\]
which concludes the proof of Claim D.

\bigskip\noindent\textbf{Claim E.} \textit{It holds that }$d_{v}(w^{\prime
},s_{3})<d_{u}(w^{\prime},s_{3}).$

\medskip\noindent Notice that
\begin{align*}
d_{v}(w^{\prime},s_{3}) &  =d(w^{\prime},v)+d(v,s_{3})=\left\lfloor
q\right\rfloor +d(v,s_{3}),\\
d_{u}(w^{\prime},s_{3}) &  =d(w^{\prime},u)+d(u,s_{3})=\left\vert
P_{2}\right\vert -\left\lfloor q\right\rfloor +d(u,s_{3}).
\end{align*}
Therefore, having in mind that $\left\vert P_{2}\right\vert =\left\vert
P_{1}\right\vert $ we further have%
\begin{align*}
d_{v}(w^{\prime},s_{3})-d_{u}(w^{\prime},s_{3}) &  =2\left\lfloor
q\right\rfloor +d(v,s_{3})-\left\vert P_{1}\right\vert -d(u,s_{3})=\\
&  =2\left\lfloor q\right\rfloor +d(v,s_{3})-d(v^{\prime},u)-2d(v,s_{1}%
)-d(u,s_{3})
\end{align*}
from which, given the definition of $q,$ we obtain%
\[
d_{v}(w^{\prime},s_{3})-d_{u}(w^{\prime},s_{3})=2\left\lfloor q\right\rfloor
-2d(v,s_{1})-2q<0
\]
which proves Claim E.\medskip

Now we distinguish the following three subcases with respect to the value of
$r.$

\medskip\noindent\textbf{Subcase 3.a:} $r=-1.$ Recall that in this case
$d(v^{\prime},w)\leq d(v^{\prime},u)-1,$ which implies $w\not =u,$ so there
exists a neighbor $z$ of the vertex $w$ on $P_{1},$ which is further from
$s_{1}$ than $w.$ Also, let $z^{\prime}$ be the neighbor of $w^{\prime}$ on
$P_{2}$ which is further from $v$ than $w^{\prime}.$ We want to prove that the
edges $x=wz$ and $x^{\prime}=w^{\prime}z^{\prime}$ are not distinguished by
$S$, see Figure \ref{Fig_r} a) for illustration. First note that
\[
d(x,s_{1})-d(x^{\prime},s_{1})=\left\lfloor q\right\rfloor +d(v^{\prime}%
,s_{1})-(\left\lfloor q\right\rfloor +d(v,s_{1}))=0
\]
which implies that $s_{1}$ does not distinguish $x$ and $x^{\prime}.$ Since
$s_{1}$ and $s_{2}$ are an antipodal pair of vertices on the even cycle
$C_{12}$, if $s_{1}$ does not distinguish $x$ and $x^{\prime}$ then $s_{2}$
does not distinguish them either. The only remaining possibility is for
$s_{3}$ to distinguish $x$ and $x^{\prime},$ but Claims D and E here imply%
\[
d(x,s_{3})-d(x^{\prime},s_{3})=d(v^{\prime},u)-\left\lfloor q\right\rfloor
-1+d(u,s_{3})-(\left\lfloor q\right\rfloor +d(v,s_{3})),
\]
where the definition of $q$ further implies%
\[
d(x,s_{3})-d(x^{\prime},s_{3})=-2\left\lfloor q\right\rfloor -1+2q=0
\]
from which we conclude that $s_{3}$ does not distinguish $x$ and $x^{\prime}$
either. Therefore, $x$ and $x^{\prime}$ are not distinguished by $S,$ which
means that $S$ cannot be a mixed metric generator.

\begin{figure}[h]
\begin{center}%
\begin{tabular}
[t]{llllll}%
a)\vspace{-0.5cm} &  & b) &  & c) & \\
& \includegraphics[scale=0.7]{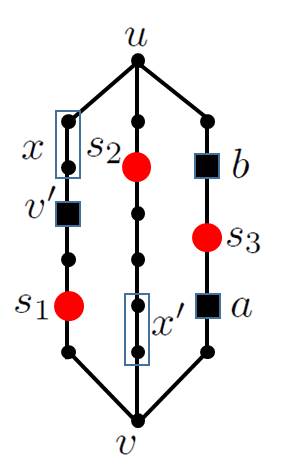} &  &
\includegraphics[scale=0.7]{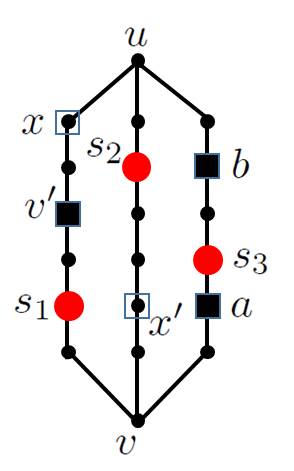} &  &
\includegraphics[scale=0.7]{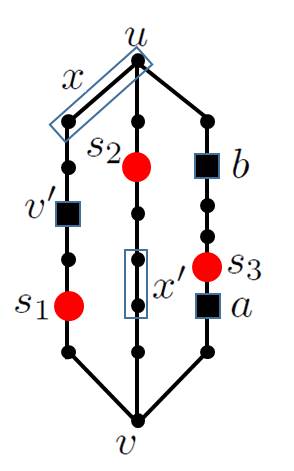}
\end{tabular}
\end{center}
\par
\caption{With the proof of Lemma \ref{Lemma_Theta}, the undistinguished pair
$x$ and $x^{\prime}$ when $d(v,a)\leq d(v,s_{3})\leq d(v,b)$\ and a) $r=-1$;
b) $r=0$; c) $r=1$.}%
\label{Fig_r}%
\end{figure}

\medskip\noindent\textbf{Subcase 3.b:} $r=0.$ In this subcase vertices $x=w$
and $x^{\prime}=w^{\prime}$ are not distinguished by $S$ (see Figure
\ref{Fig_r} b)), which is shown similarly as in Subcase 2.a, so $S$ cannot be
a mixed metric generator.

\medskip\noindent\textbf{Subcase 3.c:} $r=1.$ In this subcase notice that
$w=u$ if and only if $d(s_{3},a)=0,$ i.e. $s_{3}=a.$ When $s_{3}=a$ then
$x=s_{3}$ and $x^{\prime}=s_{3}z$ are not distinguished by $S,$ so we may
assume $s_{3}\not =a$ which implies $w\not =u,$ so there is a neighbor $z$ of
$w$ on $P_{1}$ further from $s_{1}$ than $w$. Now, let $z^{\prime}$ be the
neighbor of $w^{\prime}$ on $P_{2}$ which is further from $v$ than $w^{\prime
}.$ Then $x=wz$ and $x^{\prime}=w^{\prime}z^{\prime}$ are not distinguished by
$S$ (see Figure \ref{Fig_r} c)), which is shown by a similar calculation as in
Subcase 3.a.
\end{proof}

We will use Lemma \ref{Lemma_Theta} to prove the exact value of the mixed
metric dimension of balanced Theta graphs.

\begin{theorem}
\label{Prop_Theta}If $G$ is a balanced Theta graph, then $\mathrm{mdim}(G)=4.$
\end{theorem}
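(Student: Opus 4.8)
The plan is to establish $\mathrm{mdim}(G)=4$ by proving the two inequalities $\mathrm{mdim}(G)\leq 4$ and $\mathrm{mdim}(G)\geq 4$ separately. For the upper bound I would exhibit an explicit mixed metric generator of size $4$. The natural candidate is a set $S$ consisting of two vertices of degree $3$ (namely $u$ and $v$) together with one additional interior vertex on each of two of the three paths, or alternatively four carefully chosen interior vertices. I would then verify that every pair from $V(G)\cup E(G)$ is distinguished, leaning on Remark~\ref{Obs_enclosure}: if the chosen set of four vertices is positioned so that every edge and vertex lies on some $S$-closed path (an enclosed pair) or is at least half-enclosed, then distinguishing follows almost for free, and only incident vertex--edge pairs need separate attention. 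Since the generator can be chosen freely, placing the four vertices to saturate the three paths with shortest-path coverage should make this direction routine.

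The substantive direction is the lower bound $\mathrm{mdim}(G)\geq 4$, i.e.\ showing that no set of three vertices can be a mixed metric generator. Here I would argue that any candidate generator $S$ with $|S|=3$ fails, and the strategy is to reduce to the configuration already handled by Lemma~\ref{Lemma_Theta}. First I would dispose of degenerate placements: if $S$ contains one of the branch vertices $u$ or $v$, or if two of the three vertices lie on the same one of the three paths $P_1,P_2,P_3$, I would show directly (again via enclosure/half-enclosure arguments) that some pair---typically two edges incident to a common vertex, or a vertex and an incident edge lying ``behind'' all three elements of $S$---is left undistinguished. Intuitively, concentrating the three resolving vertices on fewer than three paths leaves an entire path (or a terminal portion of it) unresolved, so some far pair of incident edges collapses.

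Once those cases are eliminated, the only remaining configuration is precisely the one in Lemma~\ref{Lemma_Theta}: a set of exactly three vertices containing exactly one interior vertex from each of the three paths. The hard work having been done in that lemma, which asserts such an $S$ is never a mixed metric generator, this case closes immediately. Thus every $3$-set fails, giving $\mathrm{mdim}(G)\geq 4$, and combined with the explicit generator we conclude $\mathrm{mdim}(G)=4$.

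I expect the main obstacle to be the case analysis in the lower bound for placements \emph{not} covered by Lemma~\ref{Lemma_Theta}---the situations where $S$ meets one of $u,v$ or doubles up on a single path. These require exhibiting a concrete undistinguished pair in each subconfiguration, and the bookkeeping of distances $d_u$ and $d_v$ around the two cycles is delicate, much as in the lemma's own Case~3. A clean way to control this would be to observe that if $S$ fails to place one vertex strictly inside each of $P_1,P_2,P_3$, then at least one of the three paths has a terminal segment all of whose interior vertices are equidistant-ordered from every $s\in S$, forcing a pair of incident edges (or a vertex and its incident edge on a leaf-like extremal position) to remain unresolved. Making this reduction rigorous, so that the full strength of Lemma~\ref{Lemma_Theta} can be invoked on the single remaining configuration, is where most of the genuine effort lies.
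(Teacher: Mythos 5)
Your proposal follows essentially the same route as the paper: an explicit four-element generator containing $u$, $v$ and interior vertices of two of the three paths for the upper bound, and for the lower bound a case analysis on three-element sets that handles the placements meeting $\{u,v\}$ or missing a path's interior by exhibiting a concrete undistinguished pair (in the paper this is simply $u$ and an incident edge $uu_i$, or two neighbors of $u$, with balancedness guaranteeing they collapse), before invoking Lemma~\ref{Lemma_Theta} on the one remaining one-vertex-per-path configuration. The degenerate cases you worry about are in fact the easy part; the paper dispatches each in a couple of lines.
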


\begin{proof}
Let $u$ and $v$ be the two vertices in $G$ of degree $3$ and let $w$ and $z$
be two neighbors of $v.$ We want to prove that $S=\{u,v,w,z\}$ is a mixed
metric generator. Let $x$ and $x^{\prime}$ be a pair of elements from
$V(G)\cup E(G).$ Let $P_{1},$ $P_{2}$ and $P_{3}$ be the three distinct paths
connecting vertices $u$ and $v$ in $G.$ If $x$ and $x^{\prime}$ belong to a
same path $P_{i}$ then they are certainly distinguished by $u\in S$ or by
$v\in S$. Assume therefore that $x$ belongs to $P_{i}$ and $x^{\prime}$
belongs to $P_{j}$ where $i\not =j$. If $x$ and $x^{\prime}$ are distinguished
by $u$ or $v,$ then the proof is done, so we may assume that they are not
distinguished by either $u$ or $v$. If $C_{ij}$ is even, this implies $x$ and
$x^{\prime}$ are a pair of vertices or a pair of edges, while in the case of
odd cycle $C_{ij}$ the pair $x$ and $x^{\prime}$ must be a mixed pair
consisting of a vertex and an edge. Notice that at least one of the paths
$P_{i}$ and $P_{j}$ must contain at least one more vertex from $S$ besides $u$
and $v$, say $P_{i}$ contains $w.$ Then $w$ certainly distinguishes $x$ and
$x^{\prime},$ so we proved that $S$ is a mixed metric generator in $G$ which
implies $\mathrm{mdim}(G)\leq4.$

To conclude the proof, we still need to prove that any set $S$ with
$\left\vert S\right\vert <4$ cannot be a mixed metric generator. Asume first
that both $u$ and $v$ are contained in $S,$ then $\left\vert S\right\vert <4$
implies that two of the paths $P_{1},$ $P_{2}$ and $P_{3}$ do not share an
internal vertex with $S,$ say $P_{i}$ and $P_{j}.$ Let $u_{i}$ and $u_{j}$ be
the neigbors of vertex $u$ on paths $P_{i}$ and $P_{j}$ respectively. If
$P_{i}$ and $P_{j}$ are of the same length, then $u_{i}$ and $u_{j}$ are not
distinguished by $S,$ so $S$ cannot be a mixed metric generator. If $P_{i}$
and $P_{j}$ are not of the same length, say $\left\vert P_{i}\right\vert
<\left\vert P_{j}\right\vert ,$ then the fact that $G$ is balanced implies
$\left\vert P_{i}\right\vert =\left\vert P_{j}\right\vert -1$, but this
further implies $u$ and $uu_{j}$ are not distinguished by $S.$

Assume now that precisely one of the vertices $u$ and $v$ is contained in $S,$
say $u$. This implies that there is a path $P_{i}$ in $G$ which does not share
any other vertex with $S$ besides $u$. Denote by $u_{i}$ the neighbor of $u$
on $P_{i}$. Since $G$ is a balanced Theta graph it holds that $d(u_{i},w)\geq
d(u,w)$ for every internal vertex $w$ from the path $P_{j},$ $j\not =i$.
Therefore, $u$ and $uu_{i}$ are not distinguished by $S$.

Finaly, assume that neither $u$ nor $v$ are contained in $S.$ If there is a
path $P_{i}$ which does not share an internal vertex with $S,$ then $u$ and
$uu_{i}$ are obviously not distinguished by $S.$ Assume therefore that each
path $P_{i}$ shares at least one internal vertex with $S,$ which together with
the fact $\left\vert S\right\vert <4$ further implies $\left\vert S\right\vert
=3$ and each $P_{i}$ shares precisely one internal vertex with $S$. But then
Lemma \ref{Lemma_Theta} implies $S$ cannot be a mixed metric generator and the
proof is established.
\end{proof}

Since $c(G)=2$ and $L_{1}(G)=0$ holds for any Theta graph $G,$ Theorem
\ref{Prop_Theta} immediately yields the following corollary.

\begin{corollary}
For a balanced Theta graph $G,$ it holds that $\mathrm{mdim}(G)=L_{1}%
(G)+2c(G).$
\end{corollary}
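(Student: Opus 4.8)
The plan is to reduce the corollary entirely to Theorem~\ref{Prop_Theta} together with the two elementary graph invariants of a Theta graph. First I would record the two parameter values appearing on the right-hand side of the claimed equality. Since a Theta graph has its two branch vertices $u,v$ of degree $3$ and every remaining vertex of degree $2$, it has no vertices of degree $1$, so $L_{1}(G)=0$. For the cyclomatic number, writing $\ell_{1},\ell_{2},\ell_{3}$ for the edge-lengths of the three $u$--$v$ paths gives $m=\ell_{1}+\ell_{2}+\ell_{3}$ and $n=\ell_{1}+\ell_{2}+\ell_{3}-1$, the endpoints $u,v$ being shared, whence $c(G)=m-n+1=2$. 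Both facts were already observed at the start of the section, so this step is purely bookkeeping.

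With these values in hand, the right-hand side evaluates to $L_{1}(G)+2c(G)=0+2\cdot 2=4$. I would then invoke Theorem~\ref{Prop_Theta}, which asserts that a balanced Theta graph satisfies $\mathrm{mdim}(G)=4$. Combining the two yields $\mathrm{mdim}(G)=4=L_{1}(G)+2c(G)$, which is precisely the claimed equality, so the proof would conclude here.

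There is no genuine obstacle in this final step: all of the difficulty has already been absorbed into Lemma~\ref{Lemma_Theta} and Theorem~\ref{Prop_Theta}, where the nontrivial verification that no three-vertex set resolves a balanced Theta graph, and that the specific four-vertex set $\{u,v,w,z\}$ does, was carried out. The corollary is therefore a one-line consequence. The only point to watch is that the hypothesis \emph{balanced} must be retained, since Theorem~\ref{Prop_Theta}, and hence the equality $\mathrm{mdim}(G)=4$, can fail for unbalanced Theta graphs, where one instead has $\mathrm{mdim}(G)=3$ and the inequality of Conjecture~\ref{Con_inequality} is strict.
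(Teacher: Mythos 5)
Your proof is correct and follows exactly the paper's route: the paper likewise observes that $L_{1}(G)=0$ and $c(G)=2$ for any Theta graph and then cites Theorem~\ref{Prop_Theta} to get $\mathrm{mdim}(G)=4=L_{1}(G)+2c(G)$. Your explicit verification of $c(G)=2$ via $m=\ell_{1}+\ell_{2}+\ell_{3}$ and $n=\ell_{1}+\ell_{2}+\ell_{3}-1$ is accurate and just makes the bookkeeping more detailed than the paper's one-line remark.
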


This result implies that Conjecture \ref{Con_inequality} holds for balanced
Theta graphs, moreover it holds with equality in (\ref{Inequality_conj}).

\section{Unbalanced Theta graphs}

To complete the results we will now prove that Conjecture \ref{Con_inequality}
holds also for unbalanced Theta graphs, but for them the equality in
(\ref{Inequality_conj}) does not hold.

\begin{lemma}
\label{Lemma_unbalanced}Let $G$ be an unbalanced Theta graph, then
$\mathrm{mdim}(G)\leq3.$
\end{lemma}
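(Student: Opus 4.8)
The plan is to exhibit an explicit mixed metric generator $S$ of size $3$ for an arbitrary unbalanced Theta graph $G$. Since $G$ is unbalanced, two of the three $u$--$v$ paths differ in length by at least $2$; let $P_1,P_2,P_3$ be the three paths with $|P_1|\le|P_2|\le|P_3|$, so that $|P_3|\ge|P_1|+2$ (or $|P_3|\ge|P_2|+2$). My candidate set is $S=\{u,v,s\}$ where $s$ is a carefully chosen internal vertex of the longest path $P_3$. The natural choice is to place $s$ so that it is \emph{not} antipodal to either $u$ or $v$ on any cycle and so that no neighboring edge of $s$ is hidden from both $u$ and $v$; concretely, taking $s$ to be the (or a) midpoint-ish vertex of $P_3$, or a vertex whose distances to $u$ and to $v$ along $P_3$ differ from the distances along the two shorter paths, should break all the coincidences. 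I would then verify the mixed metric generator condition by the same case structure as in the proof of Theorem \ref{Prop_Theta}.

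First I would handle pairs $x,x'$ lying on a common path $P_i$: these are resolved by $u$ or $v$ exactly as before, since distance along a single path is strictly monotone from each endpoint. Next, for a cross pair with $x\in P_i$ and $x'\in P_j$ ($i\ne j$) that survives both $u$ and $v$, the argument from Theorem \ref{Prop_Theta} shows $x,x'$ must be an antipodal-type coincidence on the cycle $C_{ij}$ (a vertex/vertex or edge/edge pair on an even cycle, or a vertex/edge pair on an odd cycle). The work is to show the extra vertex $s$ on $P_3$ resolves every such surviving pair. This is where the unbalanced hypothesis is essential: because the three path-lengths cannot all be nearly equal, $s$ can be positioned so that the two shortest-path ``routes'' from $s$ to the two elements of any surviving cross pair have different lengths, using Remark \ref{Obs_enclosure} (enclosure/half-enclosure) to package the distance comparisons. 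I would lean on the half-enclosure criterion: if a shortest path from $s$ through one of $u,v$ reaches $x$ via $x'$ (or vice versa), the pair is resolved unless it is an incident vertex--edge pair, and the remaining incident pairs are precisely the ones $u$ and $v$ already handle.

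The hard part will be choosing the single vertex $s$ \emph{uniformly} so that it simultaneously resolves the dangerous pairs on all three cycles $C_{12},C_{13},C_{23}$, and confirming that no self-pair $\{s, sw\}$ (the vertex $s$ together with an incident edge) escapes: this is exactly the failure mode that forced $\mathrm{mdim}=4$ in the balanced case via Lemma \ref{Lemma_Theta}, so the proof must show that unbalancedness removes it. I expect to split into cases according to the parities of the cycles and the value of $|P_3|-|P_2|$ (at least $1$) and $|P_3|-|P_1|$ (at least $2$), placing $s$ at a distance from $v$ chosen by a floor/ceiling formula analogous to the quantity $q$ in Lemma \ref{Lemma_Theta}, and then checking the inequalities $d_u(s,\cdot)\ne d_v(s,\cdot)$ directly. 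Once $S$ is verified as a generator, we conclude $\mathrm{mdim}(G)\le 3$, which is the claim; the complementary bound $\mathrm{mdim}(G)\ge 3$ (giving equality) would follow from general lower bounds but is not needed here.
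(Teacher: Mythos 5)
Your proposed generator $S=\{u,v,s\}$ with $s$ an internal vertex of the longest path $P_{3}$ cannot work in general, so the argument has a gap that no ``careful'' choice of $s$ on $P_{3}$ can repair. An unbalanced Theta graph may well have $\left\vert P_{1}\right\vert =\left\vert P_{2}\right\vert$ (for instance path lengths $2,2,4$). In that case, for each $k$ with $1\leq k\leq\left\vert P_{1}\right\vert -1$, the internal vertex $x_{1}\in P_{1}$ with $d(u,x_{1})=k$ and the internal vertex $x_{2}\in P_{2}$ with $d(u,x_{2})=k$ satisfy $d(x_{1},u)=d(x_{2},u)$, $d(x_{1},v)=d(x_{2},v)$, and also $d(x_{1},t)=d(x_{2},t)$ for every vertex $t$ of $P_{3}$, because every path from $x_{i}$ to such a $t$ must pass through $u$ or $v$, so $d(x_{i},t)=\min\{d(x_{i},u)+d(u,t),\,d(x_{i},v)+d(v,t)\}$. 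Hence only an internal vertex of $P_{1}$ or of $P_{2}$ can distinguish $x_{1}$ from $x_{2}$, and your set contains none; $S$ is not even a vertex metric generator. This mirrors the paper's own argument in the proof of Theorem~\ref{Prop_Theta} that a three-element set containing both $u$ and $v$ always fails: there, balancedness is needed only in the subcase $\left\vert P_{i}\right\vert =\left\vert P_{j}\right\vert -1$, while the subcase $\left\vert P_{i}\right\vert =\left\vert P_{j}\right\vert$ kills your set unconditionally. Beyond this, the rest of your text is a plan rather than a verification, and the key step (that a single $s$ resolves all surviving cross pairs on all three cycles) is exactly what is not established.

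The paper avoids this trap by choosing a generator containing neither $u$ nor $v$: it takes $a_{u}$ and $a_{v}$, suitably chosen antipodal vertices of $u$ and of $v$ on the cycle $C_{13}$ (these lie in the interior of $P_{3}$ precisely because $\left\vert P_{3}\right\vert -\left\vert P_{1}\right\vert \geq2$), together with a near-midpoint $w$ of $P_{2}$, and then checks the generator property by a three-case analysis (both elements on $C_{13}$, both on $P_{2}$, one on each) driven by the enclosure criteria of Observation~\ref{Obs_enclosure}. If you wish to salvage your outline, you must place at least one generator vertex in the interior of $P_{1}$ or $P_{2}$ whenever $\left\vert P_{1}\right\vert =\left\vert P_{2}\right\vert$, which effectively forces a construction of the paper's type.
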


\begin{proof}
Let $u$ and $v$ be the two vertices of degree $3$ in $G$ and let $P_{1},$
$P_{2}$ and $P_{3}$ be three distinct paths in $G$ connecting vertices $u$ and
$v$, where without loss of generality we may assume that $\left\vert
P_{1}\right\vert \leq\left\vert P_{2}\right\vert \leq\left\vert P_{3}%
\right\vert .$ Since $G$ is an unbalanced Theta graph, it follows that
$\left\vert P_{3}\right\vert -\left\vert P_{1}\right\vert \geq2.$ By $C_{ij}$
we denote the cycle induced by paths $P_{i}$ and $P_{j}$. Let us now consider
the cycle $C_{13}$ and vertices $u$ and $v$ which belong to it. If the cycle
$C_{13}$ is even, then each of the two vertices $u$ and $v$ has precisely one
antipodal vertex on $C_{13}$. On the other hand, if $C_{13}$ is an odd cycle,
then vertices $u$ and $v$ each have precisely two antipodal vertices.

Let us denote antipodal vertices of $u$ on $C_{13}$ by $a_{u}$ and
$a_{u}^{\prime}$ (where we assume $a_{u}=a_{u}^{\prime}$ when $C_{13}$ is an
even cycle) and by $a_{v}$ and $a_{v}^{\prime}$ the two antipodals of the
vertex $v$. We may asume that the pair of antipodal vertices $a_{u}$ and
$a_{u}^{\prime}$ are denoted so that $d(v,a_{u})\leq d(v,a_{u}^{\prime}).$
Similarly, we denote vertices $a_{v}$ and $a_{v}^{\prime}$ so that the
inequality $d(u,a_{v})\leq d(u,a_{v}^{\prime})$ holds. From $\left\vert
P_{3}\right\vert -\left\vert P_{1}\right\vert \geq2$ it follows that all of
the vertices $a_{u},$ $a_{u}^{\prime},$ $a_{v},$ $a_{v}^{\prime}$ belong to
$P_{3},$ where $a_{u}$ and $a_{u}^{\prime}$ are distinct from $v$ and,
similarly $a_{v}$ and $a_{v}^{\prime}$ are distinct from $u.$ Further, let $w$
be a vertex from the path $P_{2}$ such that distance from $w$ to vertices $u$
and $v$ differs by at most one (i.e. $w$ is the middle or "almost middle"
vertex of the path $P_{2}$). Finally, we define the set $S=\{a_{u},a_{v},w\}$
for which we will prove that it is a mixed metric generator in $G.$ All these
vertices and the set $S$ are illustrated by Figure \ref{Fig_unbalanced}.
\begin{figure}[h]
\begin{center}%
\begin{tabular}
[t]{llll}%
a)\vspace{-0.5cm} &  & b) & \\
& \includegraphics[scale=0.7]{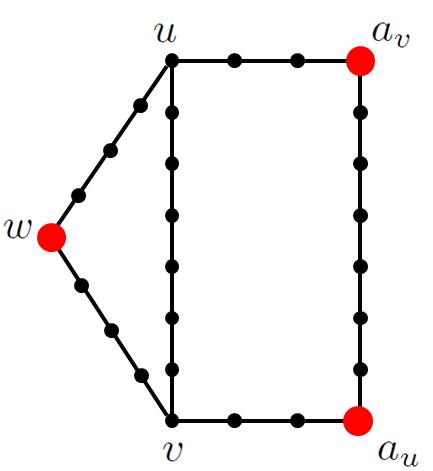} &  &
\includegraphics[scale=0.7]{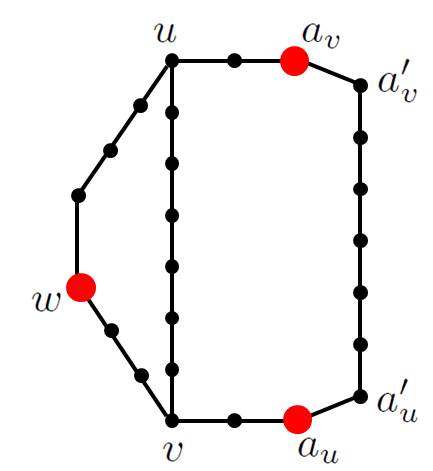}
\end{tabular}
\end{center}
\par
\caption{In the proof of Lemma \ref{Lemma_unbalanced}, the position of
vertices $w,$ $a_{u},$ $a_{u}^{\prime},$ $a_{v},$ $a_{v}^{\prime}$ and the
mixed metric generator $S=\{w,a_{u},a_{v}\}$ in the case when both $P_{2}$ and
$C_{13}$ are: a) of even length, b) of odd length.}%
\label{Fig_unbalanced}%
\end{figure}

In order to prove that $S$ is a mixed metric generator, let $x$ and
$x^{\prime}$ be a pair of elements from the set $V(G)\cup E(G).$ We
distinguish the following three cases regarding the position of $x$ and
$x^{\prime}$ in $G.$

\medskip\noindent\textbf{Case 1:} \emph{Both }$x$\emph{ and }$x^{\prime}%
$\emph{ belong to }$C_{13}.$ Notice that there are two subpaths of $C_{13}$
which connect vertices $a_{u}$ and $a_{v},$ one of them is $P_{3}[a_{u}%
,a_{v}],$ the other is $C_{13}-P_{3}(a_{u},a_{v}).$ If $x$ and $x^{\prime}$
belong to different subpaths of $C_{13}$ connecting $a_{u}$ and $a_{v},$ since
$a_{u}$ and $a_{v}$ is not an antipodal pair on $C_{13},$ it follows that $x$
and $x^{\prime}$ are distinguished by $a_{u}$ or $a_{v}.$ If both $x$ and
$x^{\prime}$ belong to $P_{3}[a_{u},a_{v}],$ they are distinguished by $a_{u}$
or $a_{v}$ according to Observation \ref{Obs_enclosure}. The only remaining
possibility is that both $x$ and $x^{\prime}$ belong to $C_{13}-P_{3}%
(a_{u},a_{v}).$ Here we distinguish several further possibilities. If $x$ or
$x^{\prime}$ is an internal vertex or an edge from $P_{1},$ then $x$ and
$x^{\prime}$ are distinguished by $a_{u}$ or $a_{v}.$ Similarly, if one from
the pair $x$ and $x^{\prime}$ belongs to $P_{3}[u,a_{v}]$ and the other to
$P_{3}[v,a_{u}]$, then again $x$ and $x^{\prime}$ are distinguished by $a_{u}$
or $a_{v}.$ Now, if both $x$ and $x^{\prime}$ belong to $P_{3}[u,a_{v}]$ they
are enclosed by $a_{v}$ and $w,$ so they are distinguished by $S$ according to
Observation \ref{Obs_enclosure}. Finally, if both $x$ and $x^{\prime}$ belong
to $P_{3}[v,a_{u}]$, they are enclosed by $a_{u}$ and $w,$ so Observation
\ref{Obs_enclosure} again implies $x$ and $x^{\prime}$ are distinguished by
$S$.

\medskip\noindent\textbf{Case 2:} \emph{Both }$x$\emph{ and }$x^{\prime}%
$\emph{ belong to }$P_{2}$\emph{.} Notice that if both $x$ and $x^{\prime}$
belong to $P_{2}[u,w],$ or they both belong to $P_{2}[v,w],$ then $x$ and
$x^{\prime}$ are enclosed by $a_{v}$ and $w$ in the first case and by $a_{u}$
and $w$ in the second case, either way they are $S$-enclosed and therefore
distinguished by $S$ according to Observation \ref{Obs_enclosure}. The only
remaining possibility is when they belong to different sides of $w,$ say $x$
belongs to $P_{2}[u,w]$ and $x^{\prime}$ belongs to $P_{2}[v,w],$ with both
$x$ and $x^{\prime}$ being distinct from $w$. In this case if $x$ and
$x^{\prime}$ are not distinguished by $w,$ this implies that $\left\vert
d(x,u)-d(x^{\prime},v)\right\vert \leq2$ where without loss of generality we
may assume $d(x,u)\leq d(x^{\prime},v)$, i.e. $0\leq d(x^{\prime
},v)-d(x,u)=\Delta$ where $\Delta\leq2$. If the shortest path from $x$ to
$a_{u}$ leads through $w,$ then $x$ and $x^{\prime}$ are distinguished by
$a_{u}$ according to Observation \ref{Obs_enclosure}. The similar argument
holds when the shortest path from $x^{\prime}$ to $a_{v}$ leads through $w.$
So, let us assume the opposite, i.e. that the shortest path from $x$ to
$a_{u}$ and from $x^{\prime}$ to $a_{v}$ leads through $P_{1}$. Notice that if
$C_{13}$ is even, then there is a shortest path which leads both through
$P_{1}$ and another one through $P_{3}.$ And if $C_{13}$ is odd, then there is
only one which leads only through $P_{1}$. Hence, we have
\begin{align*}
d(x,a_{u}) &  =d(x,u)+\left\vert P_{1}\right\vert +d(v,a_{u})=d(x^{\prime
},v)-\Delta+\left\vert P_{1}\right\vert +d(v,a_{u})=\\
&  =d(x^{\prime},a_{u})-\Delta+\left\vert P_{1}\right\vert .
\end{align*}
Therefore, $x$ and $x^{\prime}$ are not distinguished by $a_{u}$ only when
$\left\vert P_{1}\right\vert =\Delta.$ But in that case we have
\begin{align*}
d(x^{\prime},a_{v}) &  =d(x^{\prime},v)+\left\vert P_{1}\right\vert
+d(u,a_{v})=d(x,u)+\Delta+\left\vert P_{1}\right\vert +d(u,a_{v})=\\
&  =d(x,a_{v})+2\left\vert P_{1}\right\vert >d(x,a_{v}),
\end{align*}
so $x$ and $x^{\prime}$ are distinguished by $a_{v}\in S$ and we are finished.

\medskip\noindent\textbf{Case 3:} $x$\emph{ belongs to }$P_{2}$\emph{ and
}$x^{\prime}$\emph{ belongs to }$C_{13}.$ Recall that $w$ is the middle (or
"almost" middle) vertex of the path $P_{2}$, where without the loss of
generality we may assume $d(w,u)\geq d(w,v).$ Denote by $u_{i}$ the neighbor
of $u$ on the path $P_{i}$ and by $v_{i}$ the neighbor of $v$ on the path
$P_{i}.$ Now, when $P_{2}$ is of even length, then $x$ and $x^{\prime}$ are
not distinguished by $w$ only if $x\in\{u,v\}$ and $x^{\prime}\in
\{u,v,uu_{1},uu_{3},vv_{1},vv_{3}\}.$ But, since both $u$ and $v$ belong also
to $C_{13}$, this means $x$ belongs also to $C_{13},$ so this case reduces to
Case 1. Assume, therefore, that the length of $P_{2}$ is odd. Recall that in
this case $d(w,u)>d(w,v).$ Notice that $x$ and $x^{\prime}$ are not
distinguished by $w$ only if $x\in\{u_{2},u_{2}u\}$ and $x^{\prime}%
\in\{v,vv_{1},vv_{3}\}.$ Also, notice that there certainly exists a shortest
path from $x$ to $a_{u}$ which leads through $v$ (if there is a shortest path
from $x$ to $a_{u}$ which leads through $P_{3},$ then $C_{13}$ is even and
there is also a shortest path which leads through $P_{1},$ so the claim
holds). Finally, notice that $d(x,a_{u})>d(v,a_{u}).$ On the other hand it
obviously holds that $d(x^{\prime},a_{u})\leq d(v,a_{u}).$ We conclude that
$d(x,a_{u})>d(x^{\prime},a_{u}),$ which implies that $x$ and $x^{\prime}$ are
distinguished by $S.$

\medskip We have established that any $x$ and $x^{\prime}$ are distinguished
by $S,$ so $S$ is a mixed metric generator. Since $\left\vert S\right\vert
=3,$ this implies $\mathrm{mdim}(G)\leq3$.
\end{proof}

\begin{theorem}
If $G$ is an unbalanced Theta graph, then $\mathrm{mdim}(G)=3.$
\end{theorem}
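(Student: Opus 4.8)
The plan is to establish the result by combining the upper bound from Lemma~\ref{Lemma_unbalanced} with a matching lower bound. Since Lemma~\ref{Lemma_unbalanced} already gives $\mathrm{mdim}(G)\leq 3$ for every unbalanced Theta graph $G$, it remains only to prove that $\mathrm{mdim}(G)\geq 3$, i.e.\ that no set $S$ of size at most $2$ can be a mixed metric generator. Together these two inequalities force $\mathrm{mdim}(G)=3$.

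First I would dispose of the trivial bound: a mixed metric generator must in particular resolve all \emph{vertices}, so it is a vertex metric generator, and it is well known (and easy) that no graph containing a cycle has vertex metric dimension $1$; hence $\mathrm{mdim}(G)\geq 2$. The real work is to rule out $\left\vert S\right\vert =2$. So suppose $S=\{s_{1},s_{2}\}$ and seek, in every configuration, a pair $x,x^{\prime}\in V(G)\cup E(G)$ that $S$ fails to distinguish. I expect to argue much as in the $\kappa=2$ analysis already used for balanced Theta graphs: a Theta graph is $2$-connected, and a two-element set is too small to ``guard'' all three internally disjoint $u$--$v$ paths simultaneously. Concretely, I would split into cases according to how the two vertices of $S$ are distributed among the three paths $P_{1},P_{2},P_{3}$ (and whether either of $s_{1},s_{2}$ coincides with the degree-$3$ vertices $u,v$).

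The key geometric observation driving the lower bound is the following: if some $u$--$v$ path $P_{i}$ contains no vertex of $S$ in its interior, then for a suitable degree-$3$ endpoint the incident edge leading onto $P_{i}$ is not resolved. Indeed, if $w$ denotes the neighbor of $u$ (say) on such an unguarded path $P_{i}$, then for every $s\in S$ a shortest path from $s$ to $w$ can be routed so as to pass through $u$, giving $d(s,w)\geq d(s,u)$ and hence $d(s,u)=d(s,uw)$ for all $s\in S$; so the incident vertex--edge pair $u,uw$ is half-enclosed but undistinguished, exactly the exceptional case in Remark~\ref{Obs_enclosure}. Since $\left\vert S\right\vert =2$ and there are three interior-disjoint paths, at least one path has empty interior intersection with $S$ unless both elements of $S$ are the branch vertices themselves; so the main case to handle carefully is $S=\{u,v\}$. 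For $S=\{u,v\}$ I would exhibit an unresolved pair directly: the first edges $uu_{i},uu_{j}$ of two equal-length paths are not distinguished by $\{u,v\}$, and when no two paths have equal length one uses the unbalanced inequality $\left\vert P_{3}\right\vert -\left\vert P_{1}\right\vert \geq 2$ together with an incident vertex--edge pair at $u$ or $v$, paralleling the $\left\vert S\right\vert<4$ argument in the proof of Theorem~\ref{Prop_Theta}.

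The main obstacle I anticipate is the bookkeeping in the case where both vertices of $S$ are genuinely interior, lying on two of the three paths (the third then being unguarded). There the ``unguarded path'' argument above should close the case immediately, so the subtler situation is when $S$ places one vertex on a path and the other at a branch vertex, or both on the \emph{same} path; in those configurations I must verify that a concrete incident vertex--edge pair or a pair of symmetric edges across an unguarded path remains unresolved, and I would lean on the antipodality/parity bookkeeping from the balanced case rather than reproving it. I expect no single hard idea is needed beyond Remark~\ref{Obs_enclosure} and the pigeonhole fact that two vertices cannot have all three internally disjoint paths meeting their interiors; the proof should therefore be a short case analysis concluding $\mathrm{mdim}(G)\geq 3$, which combined with Lemma~\ref{Lemma_unbalanced} yields $\mathrm{mdim}(G)=3$.
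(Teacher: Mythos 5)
Your overall architecture (Lemma~\ref{Lemma_unbalanced} for the upper bound, then ruling out $\left\vert S\right\vert\leq 2$) matches the paper, but the engine you propose for the lower bound is flawed. Your ``key geometric observation'' --- that an unguarded path $P_{i}$ always yields an unresolved incident pair $u,uw$ or $v,vw'$ at one of its endpoints, because every shortest path from $s\in S$ to $w$ can be routed through $u$ --- is false. Take $P_{1}=u\!-\!m\!-\!v$ of length $2$, and $\left\vert P_{2}\right\vert=\left\vert P_{3}\right\vert=6$ (so $G$ is unbalanced), with $s_{1}$ the interior vertex of $P_{2}$ adjacent to $v$ and $s_{2}$ the interior vertex of $P_{3}$ adjacent to $u$. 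The only unguarded path is $P_{1}$, yet $d(s_{1},m)=2<3=d(s_{1},u)$ (the shortest $s_{1}$--$m$ path runs through $v$, not $u$), so $s_{1}$ distinguishes $u$ from $um$; symmetrically $s_{2}$ distinguishes $v$ from $vm$. So neither incident pair on the unguarded path is a witness, and your case analysis has nowhere left to go: the actual unresolved pair in this configuration is $s_{1}$ together with its incident edge pointing toward $u$ --- a pair sitting at a vertex \emph{of} $S$, which your plan never examines. The inequality $d(s,w)\geq d(s,u)$ only holds when the detour through $v$ and along $P_{i}$ is not shorter than the direct route, i.e.\ when $2d(s,v)\geq\left\vert P_{j}\right\vert-\left\vert P_{i}\right\vert$ for $s$ on $P_{j}$; in an unbalanced Theta graph this can fail.

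For comparison, the paper avoids this trap entirely by splitting on whether both elements of $S$ lie on a common cycle $C_{1j}$ (in which case it invokes $\mathrm{mdim}(C_{n})=3$ together with isometry of the cycle in $G$), and otherwise --- $s_{1}$ interior to $P_{2}$, $s_{2}$ interior to $P_{3}$ --- it compares the distances from the two neighbors $w,z$ of $s_{1}$ to $s_{2}$: if they are equal the two edges at $s_{1}$ are unresolved, and if not, $s_{1}$ and its edge toward the farther neighbor are unresolved. That local argument at a vertex of $S$ is exactly the ingredient your proposal is missing, so as written the proof does not go through.
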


\begin{proof}
Given the result from Lemma \ref{Lemma_unbalanced}, it is sufficient to prove
that a set $S\subseteq V(G)$ such that $\left\vert S\right\vert =2$ cannot be
a mixed metric generator. We use the same notation as before, i.e. $P_{1}$,
$P_{2}$ and $P_{3}$ are the three paths connecting vertices $u$ and $v$ of
degree $3$ in $G$ denoted so that $\left\vert P_{1}\right\vert \leq\left\vert
P_{2}\right\vert \leq\left\vert P_{3}\right\vert .$ By $C_{ij}$ we denote the
cycle induced by paths $P_{i}$ and $P_{j}$. Let us further denote by $s_{1}$
and $s_{2}$ the only pair of elements from $S.$

Assume first that both $s_{1}$ and $s_{2}$ belong to $C_{12}$. As the mixed
metric dimension of any cycle equals three, it follows that $S$ is not a mixed
metric generator in $C_{12}.$ Since $C_{12}$ is an isometric subgraph of $G$
it follows that $S$ cannot be a mixed metric generator in $G$ either. The
similar argument holds when both $s_{1}$ and $s_{2}$ belong to $C_{13}$. The
only remaining possibility is that $s_{1}$ belongs to $P_{2}$ and $s_{2}$
belongs to $P_{3}$. Notice that both $s_{1}$ and $s_{2}$ in this case must be
of degree two, otherwise the pair $s_{1}$ and $s_{2}$ would belong to $C_{12}$
or $C_{13}$ and the case would reduce to the already proven cases. Let $w$ and
$z$ be two neighbors of $s_{1}$. There are only two possibilities with respect
to the distances from $w$ and $z$ to $s_{2}$, it is either $d(w,s_{2}%
)=d(z,s_{2})$ or $d(w,s_{2})\not =d(z,s_{2})$. If $d(w,s_{2})=d(z,s_{2})$,
then $ws_{2}$ and $zs_{2}$ are not distinguished by $S$. On the other hand, if
$d(w,s_{2})\not =d(z,s_{2})$ we may, without the loss of generality, assume
that $d(w,s_{2})<d(z,s_{2})$. But then $s_{2}$ and $s_{2}z$ are not
distinguished by $S$ and we are finished.
\end{proof}

As for an unbalanced Theta graph $G$ we have $L_{1}(G)=0$ and $c(G)=2,$ we
immediately obtain the following result.

\begin{corollary}
For an unbalanced Theta graph $G$ it holds that $\mathrm{mdim}(G)<L_{1}%
(G)+2c(G).$
\end{corollary}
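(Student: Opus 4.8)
The plan is to combine the value of the mixed metric dimension obtained in the immediately preceding theorem with the two structural invariants of a Theta graph, both of which are forced by its definition. First I would record that in a Theta graph $G$ every vertex has degree $2$ or $3$, so $G$ contains no vertex of degree $1$ and hence $L_{1}(G)=0$. Next I would compute the cyclomatic number directly from the definition $c(G)=m-n+1$: writing $\ell_{1},\ell_{2},\ell_{3}$ for the lengths of the three internally disjoint paths connecting $u$ and $v$ gives $m=\ell_{1}+\ell_{2}+\ell_{3}$, while $n=\ell_{1}+\ell_{2}+\ell_{3}-1$ (the two branch vertices $u,v$ together with the $\ell_{i}-1$ internal vertices of each path), whence $c(G)=m-n+1=2$.

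With these two constants in hand, the corollary follows by substitution. The preceding theorem asserts that an unbalanced Theta graph satisfies $\mathrm{mdim}(G)=3$, whereas $L_{1}(G)+2c(G)=0+2\cdot 2=4$. Hence $\mathrm{mdim}(G)=3<4=L_{1}(G)+2c(G)$, which is precisely the claimed strict inequality. I do not expect any genuine obstacle here: all of the substantive work is carried out in Lemma~\ref{Lemma_unbalanced} and the preceding theorem, which together pin down $\mathrm{mdim}(G)=3$, so the corollary is merely the translation of that equality into the language of Conjecture~\ref{Con_inequality}, confirming that the bound holds for unbalanced Theta graphs and moreover that it is not tight for them.
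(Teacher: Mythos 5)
Your proposal is correct and matches the paper's own derivation: the paper likewise obtains the corollary immediately from $\mathrm{mdim}(G)=3$ (the preceding theorem) together with $L_{1}(G)=0$ and $c(G)=2$, giving $3<0+2\cdot 2=4$. Your explicit count verifying $c(G)=m-n+1=2$ is a small addition the paper merely asserts, but the argument is the same.
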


We conclude from this result that Conjecture \ref{Con_inequality} holds for
the class of unbalanced Theta graphs with the strict inequality in
(\ref{Inequality_conj}).

\section{Concluding remarks}

In \cite{SedSkrekMixed} it was conjectured that $\mathrm{mdim}(G)\leq
L_{1}(G)+2c(G)$ for all graphs, where $c(G)$ is the cyclomatic number and
$L_{1}(G)$ the number of leaves in a graph $G$ (see Conjecture
\ref{Con_inequality}). In this paper we focused our interest on graphs for
which the conjecture holds with equality. It was already proven in literature
that the equality holds for all trees, even more for all cactus graphs in
which every cycle has precisely one vertex of degree $\geq3$. We wanted to
find other graphs for which the equality holds. By Proposition
\ref{Prop_kapa3} we know that the equality can hold only for graphs with
$\kappa(G)=1$ or $\kappa(G)=2$. Since all cactus graphs (except a cycle graph)
have vertex conectivity equal to $1,$ this means there were no known graphs
with $\kappa(G)=2$ for which the equality holds. In this paper we found such a
family, i.e. balanced Theta graphs, for which the equality also holds. We
further proved that for unbalanced Theta graphs Conjecture
\ref{Con_inequality} also holds, but with strict inequality in
(\ref{Inequality_conj}). The natural question that arises is: \textit{Are
there any other graphs for which the equality holds?} Our investigation of the
question leads us to the opinion that there are not, i.e. having in mind that
trees are a subclass of cactus graphs we state the following formal conjecture.

\begin{conjecture}
\label{Con_Attained}For a graph $G$ it holds that $\mathrm{mdim}%
(G)=L_{1}(G)+2c(G)$ if and only if $G$ is a cactus graph in which every cycle
has precisely one vertex of degree $\geq3$ or a balanced Theta graph.
\end{conjecture}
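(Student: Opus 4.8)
The backward implication needs no new work: if $G$ is a cactus graph in which every cycle has precisely one vertex of degree $\geq 3$, then Proposition \ref{Prop_Kelm} (when $c(G)=0$) and Theorem \ref{Prop_cactus} (when $c(G)\geq 1$) give $\mathrm{mdim}(G)=L_1(G)+2c(G)$, while if $G$ is a balanced Theta graph the corollary to Theorem \ref{Prop_Theta} gives $\mathrm{mdim}(G)=4=L_1(G)+2c(G)$. Thus the whole content of Conjecture \ref{Con_Attained} lies in the forward implication, and my plan is to assume $\mathrm{mdim}(G)=L_1(G)+2c(G)$ and to strip $G$ down along its connectivity and block structure until only the two listed families survive.

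The first step is to control connectivity. A graph with $\kappa(G)\geq 2$ has no leaves, so equality would force $\mathrm{mdim}(G)=2c(G)$; for $\kappa(G)\geq 3$ this contradicts Proposition \ref{Prop_kapa3}, whence $\kappa(G)\leq 2$. Because a graph of connectivity at most $2$ may still contain a $3$-connected block, I would next prove a block-local strengthening of Proposition \ref{Prop_kapa3}, namely that a $3$-connected block forces $\mathrm{mdim}(G)<L_1(G)+2c(G)$. Granting this, in an extremal $G$ every non-bridge block is $2$-connected but not $3$-connected. Since a $2$-connected graph with cyclomatic number $k$ satisfies $m=n+k-1$, for $k=1$ such a block is a cycle and for $k=2$ it is a Theta graph (a figure-eight being excluded by $2$-connectivity); the $2$-connected blocks with $k\geq 3$ are those that remain to be excluded by the hard analytic step below.

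The core of the forward direction is then an induction that peels off leaf blocks of the block tree, supported by reduction lemmas in the spirit of the cactus results. A pendant leaf $\ell$ is forced into every mixed metric generator (the pair consisting of the neighbour of $\ell$ and its incident pendant edge is resolved only by $\ell$ itself), and peeling pendant paths should reduce $\mathrm{mdim}$ and $L_1(G)+2c(G)$ by equal amounts; a pendant cycle block has its unique cut vertex as its only vertex of degree $\geq 3$, and removing it should reduce $c(G)$ by $1$ while adjusting $\mathrm{mdim}$ so as to preserve the identity. Each such reduction lands in the cactus family by the inductive hypothesis, so the non-reducible cases are the $2$-connected ones: a cycle (excluded as $G=C_n$), a Theta graph, or a $2$-connected block with $c\geq 3$. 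For Theta blocks, Theorem \ref{Prop_Theta} together with Lemma \ref{Lemma_unbalanced} pins equality to the balanced case, which is exactly the surviving family.

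The two steps I expect to be genuinely hard are the ones that keep this a conjecture. First, I must show that every $2$-connected graph with cyclomatic number at least $3$ satisfies $\mathrm{mdim}<2c$: Proposition \ref{Prop_kapa3} covers only the $3$-connected case, and the $2$-connected, non-$3$-connected graphs with $c\geq 3$ (generalized theta graphs on four or more internally disjoint $u$--$v$ paths, fusions of theta-blocks at single vertices, and similar structures) are untouched; the intended route is to build a resolving set of size strictly below $2c$ by probing the vertices of degree $\geq 3$ and arguing by enclosure and antipodality as in Lemma \ref{Lemma_Theta} and Lemma \ref{Lemma_unbalanced}, but making this uniform is delicate. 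Second, I must rule out attaching anything to a balanced Theta: if an extremal $G$ contains a balanced Theta as a proper leaf block, the probes reaching into the attached part should be able to take over the role of one of the four Theta-probes, forcing $\mathrm{mdim}<L_1+2c$ and thereby excluding all mixed graphs. Since even the underlying bound in Conjecture \ref{Con_inequality} is open for general graphs and the equality characterization is at least as hard, a complete proof would most likely have to establish that inequality en route; I regard the $2$-connected, $c\geq 3$ deficiency as the principal obstacle.
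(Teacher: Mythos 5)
The statement you set out to prove is Conjecture~\ref{Con_Attained}: the paper offers \emph{no} proof of it. It is posed as an open problem precisely because the forward implication is unknown, so there is no ``paper proof'' to match your attempt against. Your backward implication is correct and is exactly the paper's justification for stating the conjecture: Proposition~\ref{Prop_Kelm} and Theorem~\ref{Prop_cactus} give equality for cactus graphs in which every cycle has precisely one vertex of degree $\geq 3$, and the corollary to Theorem~\ref{Prop_Theta} gives it for balanced Theta graphs. But your forward direction is a research program, not a proof, and you concede as much: the two steps you label ``genuinely hard'' are the entire open content of the conjecture, and several intermediate steps are asserted in the conditional mood (``should reduce,'' ``should be able to take over'') without argument.

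Concretely, three gaps would have to be closed. First, your ``block-local strengthening'' of Proposition~\ref{Prop_kapa3} does not follow from anything in the paper: mixed metric dimension is not block-local --- a probe vertex in one block resolves pairs in another, the pairs $x,x'$ may themselves lie in different blocks, and no decomposition of $\mathrm{mdim}$ over the block tree is known --- so neither this step nor your equal-decrement peeling lemmas (that removing a pendant path or pendant cycle decreases $\mathrm{mdim}$ and $L_{1}(G)+2c(G)$ by the same amount in the presence of arbitrary other blocks) can be obtained by applying the cactus results blockwise; the proof of Theorem~\ref{Prop_cactus} in \cite{SedSkrekMixed} uses globally that all cycles are edge-disjoint. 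Second, the $2$-connected case with $c(G)\geq 3$ (generalized Theta graphs and beyond) is untouched by every result cited here, and proving $\mathrm{mdim}<2c$ there would in particular establish Conjecture~\ref{Con_inequality} for those graphs --- which the paper leaves open; as you correctly note, the equality characterization subsumes the open inequality, so no argument confined to the paper's toolkit (enclosure, antipodality, the case analysis of Lemma~\ref{Lemma_Theta} and Lemma~\ref{Lemma_unbalanced}) is known to scale. Third, excluding a balanced Theta occurring as a proper block again runs into the non-locality problem: external probes genuinely can replace Theta-probes, and quantifying when they do is unaddressed. Your roadmap is sensible and consistent with the evidence assembled in the paper, but as a proof it is incomplete at exactly the points that make the statement a conjecture.
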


\bigskip\noindent\textbf{Acknowledgements.}~~The authors acknowledge partial
support Slovenian research agency ARRS program \ P1--0383 and ARRS project
J1-1692 and also Project KK.01.1.1.02.0027, a project co-financed by the
Croatian Government and the European Union through the European Regional
Development Fund - the Competitiveness and Cohesion Operational Programme.

\end{document}